\documentclass[reqno,11pt]{amsart}
\usepackage{amssymb, amsmath,amssymb, amscd, amsbsy,latexsym}
\usepackage{amsthm, amsfonts,mathrsfs}
\usepackage[T1]{fontenc}
\usepackage{times}
\usepackage{epsfig}
\usepackage{color}
\input amssym.def
\input amssym.tex

\hoffset -0.8cm \voffset -0.8cm \textheight 228mm \textwidth 140mm

\def\inte#1{
\displaystyle\mathop{#1\kern0pt}^\circ }





\def\virgp{\raise 2pt\hbox{,}}
\def\cdotpv{\raise 2pt\hbox{;}}

\def\C{\mathop{\mathbb C\kern 0pt}\nolimits}
\def\DD{\mathop{\mathbb D\kern 0pt}\nolimits}
\def\EE{\mathop{{\mathbb E \kern 0pt}}\nolimits}
\def\K{\mathop{\mathbb K\kern 0pt}\nolimits}
\def\N{\mathop{\mathbb N\kern 0pt}\nolimits}
\def\Q{\mathop{\mathbb Q\kern 0pt}\nolimits}
\def\R{\mathop{\mathbb R\kern 0pt}\nolimits}
\def\SS{\mathop{\mathbb S\kern 0pt}\nolimits}
\def\ZZ{\mathop{\mathbb Z\kern 0pt}\nolimits}
\def\TT{\mathop{\mathbb T\kern 0pt}\nolimits}
\def\P{\mathop{\mathbb P\kern 0pt}\nolimits}




\newcommand{\beq}{\begin{equation}}
\newcommand{\eeq}{\end{equation}}
\newcommand{\ben}{\begin{eqnarray}}
\newcommand{\een}{\end{eqnarray}}
\newcommand{\beno}{\begin{eqnarray*}}
\newcommand{\eeno}{\end{eqnarray*}}


\renewcommand{\theequation}{\thesection.\arabic{equation}}
\newtheorem{remark}{Remark}[section]
\newtheorem{lemma}{Lemma}[section]

\newtheorem{theorem}{Theorem}[section]

\newdimen\eqjot \eqjot = 1\jot
\def\openupeq{\openup \the\eqjot}

\def\pofbox#1 #2$#3${\setbox0=\hbox{$#3$}\ht0=0pt\dp0=0pt\wd0=0pt\hskip-#1pt\raise#2pt\box0\hskip#1pt}

\begin{document}

\title[Orbital stability of peakons for a higher-order $\mu$-CH equation]
{Orbital stability of periodic peakons for a new higher-order $\mu$-Camassa-Holm equation}
\author{Gezi Chong}
\address{Gezi Chong\newline
Center for Nonlinear Studies and School of Mathematics\\
Northwest University\\
Xi'an 710127\\
P. R. China\\
} \email{geziChongxajd@163.com}
\author{Ying Fu}
\address{Ying Fu\newline
Center for Nonlinear Studies and School of Mathematics\\
Northwest University\\
Xi'an 710127\\
P. R. China\\
} \email{fuying@nwu.edu.cn}
\date{\today}
\maketitle

\begin{abstract}
Consideration here is a higher-order $\mu$-Camassa-Holm equation, which is a higher-order extension
of the $\mu$-Camassa-Holm equation and retains some properties of the $\mu$-Camassa-Holm equation
and the modified $\mu$-Camassa-Holm equation. By utilizing the inequalities with the maximum and
minimum of the solution related to the first three conservation laws, we establish that the periodic peakons
of this equation are orbitally stable under small perturbations in the energy space.
\end{abstract}

\noindent {\sl Keywords\/}: higher-order $\mu$-Camassa-Holm equation; periodic peakons; conservation laws;
small perturbations; orbital stability

\vskip 0.2cm

\noindent {\sl Mathematics Subject Classification} (2020): 35B35, 35G25, 37K40, 37K45.  

\renewcommand{\theequation}{\thesection.\arabic{equation}}
\setcounter{equation}{0}

\section{Introduction}
In this paper, we consider the following new higher-order $\mu$-Camassa-Holm (CH) equation \cite{wlfq}
\begin{equation}\label{mma}
\begin{split}
u_{t}&+\sum\limits_{i=1}^{3}a_{i}u^{i}u_{x}
+(\mu-\partial_{x}^{2})^{-1}\partial_{x}\Bigg(\sum\limits_{i=1}^{3}a_{i}u^{i-1}\left(\left(i+1\right)\mu(u)u+\frac{1}{2}iu_{x}^{2}\right)\\
&-\frac{23}{24}\sum\limits_{i=1}^{2}(i+1)a_{i+1}\mu^{2}(u)u^{i}\Bigg)=0,\\
\end{split}
\end{equation}
where $u(t,x)$ is a time-dependent but spatially periodic function over the unit circle
$\mathbb{S}^{1}=\mathbb{R}/\mathbb{Z}$, $\mu(u)=\int_{\mathbb{S}^{1}}u(t,x)dx$ denotes its mean,
and $a_{1}$, $a_{2}$, $a_{3}$ are constants.
This model equation was introduced as a higher-order extension of the $\mu$-CH equation
by Wang et al. in \cite{wlfq}.

The well-known Camassa-Holm (CH) equation \cite{ch} (derived earlier in \cite{ff})
\begin{equation}\label{mme}
m_{t}+2mu_{x}+um_{x}=0,\quad\quad m=u-u_{xx},
\end{equation}
which was proposed as a model for the unidirectional propagation of the shallow water waves over a flat
bottom, and $u(t,x)$ stands for the fluid velocity at time $t$ in the spatial direction $x$ \cite{ch,cl,ion}.
This equation has many important properties such as complete integrability and
infinitely many conservation laws \cite{ch,ff}, wave breaking phenomena \cite{ce,lo}, geometric formulations
\cite{cq,kou,mis} and the existence of peaked solitons and multi-peakons \cite{achm,ch,cht}. Furthermore, the results of the analysis
of the CH equation \eqref{mme} could be also found in \cite{bc,bou,dan,lp,wl} and references therein.
A short-wave limit of the CH equation is the integrable Hunter-Saxton (HS) equation \cite{hs}
\begin{equation*}
u_{xt}+uu_{xx}+\frac{1}{2}u_{x}^{2}=0.
\end{equation*}
A midway equation between the CH equation and HS equation is the $\mu$-CH equation, that is,
Eq. \eqref{mma} with $a_{2}=a_{3}=0$ and $a_{1}=1$ reduces to the $\mu$-CH
equation \cite{klm}
\begin{equation}\label{mmc}
m_{t}+2mu_{x}+um_{x}=0,\quad\quad m=\mu(u)-u_{xx}.
\end{equation}
This equation was proposed as an integrable equation arising in the study
of the diffeomorphism group of the circle by Khesin, Lenells and Misiolek \cite{klm}.
It describes the propagation of weakly nonlinear orientation
waves in a massive nematic liquid crystals with external magnetic field and self-interaction.
Eq. \eqref{mmc} can also be viewed as a geodesic equation with respect to a right invariant
Riemannian metric induced by the $\mu$ inner product \cite{klm}
\begin{equation*}
\langle f,g\rangle_{\mu}=\mu(f)\mu(g)+\int_{\mathbb{S}^{1}}f'(x)g'(x)dx.
\end{equation*}
More interestingly, Eq. \eqref{mmc} is formally integrable, since it admits a Lax formulation
and has the bi-Hamiltonian structure. Its integrability, blow-up, global existence,
well-posedness, single peakon and multi-peakons were discussed in \cite{flq,klm,lmt}.

Another celebrated integrable equation admitting peakons is the Degasperis-Procesi (DP) equation
\begin{equation}\label{mmd}
m_{t}+um_{x}+3u_{x}m=0,\quad\quad m=u-u_{xx}.
\end{equation}
It is regarded as a model for nonlinear shallow water dynamics and its asymptotic accuracy is the same as for the CH shallow
water equation \cite{cl}. Degasperis, Kholm and Khon \cite{dkk} proved that the DP equation \eqref{mmd} is completely integrable
by constructing a Lax pair. In \cite{dkk}, it was also shown that Eq. \eqref{mmd} has a bi-Hamiltonian structure with an infinite sequence
of conserved quantities. Its blow-up mechanism and global existence have been presented in \cite{ely,ely-1,ly}. Besides, the local well-posedness
of the initial-value problem of the DP equation in Besov spaces and Sobolev spaces was established in \cite{gl,yin}.
A short wave limit of the DP equation is the integrable Burgers equation. A midway equation between the DP equation and
Burgers equation is the $\mu$-DP equation of the form \eqref{mmd} with $m=(\mu-\partial_{x}^{2})u$, which was derived by Lenells,
Misiolek and Ti$\breve{g}$lay \cite{lmt}. Its integrability, well-posedness, existence of peakons and blow-up were also verified in \cite{flq,lmt}.

It is worth noting that all nonlinear terms in the $\mu$-CH and $\mu$-DP equations are quadratic.
Recently, some $\mu$-CH-type equations with cubic nonlinearity have been introduced.
For example, the nonlocal counterpart of the modified Camassa-Holm (mCH) equation, that is,
the modified $\mu$-CH equation with cubic nonlinearity is as follows:
\begin{equation}\label{mmg}
m_{t}+((2\mu(u)u-u_{x}^{2})m)_{x}=0,\quad\quad m=\mu(u)-u_{xx},
\end{equation}
which was introduced in \cite{qfl}. Furthermore, it was demonstrated that Eq. \eqref{mmg} arises from a non-stretching planar curve flow
in Euclidean geometry \cite{qfl}. Moreover, the existence of peaked traveling waves, wave-breaking and
local well-posedness were established in \cite{lqz,qfl,qfl-1}.

In addition, various higher-order extensions of the $\mu$-CH-type equations have also been proposed. In \cite{wlfq},
a new $\mu$-version of the higher-order CH equation \eqref{mma} was derived by Wang, Luo, Fu and Qu.
It preserves some typical properties of the $\mu$-CH and modified $\mu$-CH equations, and can be obtained
from the existence of peaked solitons, $H_{\mu}$-conserved density and a higher-order conserved density.
Compared with the $\mu$-CH equation, the model admits much stronger local nonlinearity.
It was shown in \cite{wlfq} that Eq. \eqref{mma} possesses the peaked traveling wave solutions. Meanwhile, the local well-posedness,
blow-up criterion and wave breaking mechanism have also been discussed in \cite{wlfq}.

One remarkable property of the CH equation \eqref{mme}
is the presence of peaked soliton solutions, called peakons. It is presented by
\begin{equation*}
u(t,x)=\varphi_{c}(x-ct)=ae^{-|x-ct|},
\end{equation*}
where the amplitude $a$ is given by the constant $c$ for the CH equation \cite{ch}. Its corresponding periodic peakon takes the form
\begin{equation*}
u(t,x)=\varphi_{c}(x-ct)=a\frac{\cosh\left(x-ct-[x-ct]-\frac{1}{2}\right)}{\cosh\left(\frac{1}{2}\right)},
\end{equation*}
where the notation $[\zeta]$ denotes the greatest integer part of $\zeta$, and the amplitude $a$ is also given by
$c$ for the CH equation.

Moreover, it was noticed that the $\mu$-CH equation \cite{klm,lmt}
and the modified $\mu$-CH equation \cite{qfl} also admit periodic peakons of the form:
\begin{equation}\label{mmh}
u(x,t)=\varphi_{c}(x-ct)=a\varphi(x-ct),
\end{equation}
where
\begin{equation}\label{lblb}
\varphi(x)=\frac{1}{2}\left(x^{2}+\frac{23}{12}\right),\quad\quad x\in\left[-\frac{1}{2},\frac{1}{2}\right],
\end{equation}
and $\varphi$ is extended periodically to the real line, and the amplitude $a$ takes values $12c/13$
and $2\sqrt{3c}/5$, respectively, for the $\mu$-CH equation and modified $\mu$-CH equation.
Analogous to the above equations, the higher-order $\mu$-CH equation \eqref{mma} \cite{wlfq} possesses the peaked periodic traveling
wave solution, which takes the form of \eqref{mmh} with $a$ satisfying
\begin{equation}\label{mmi}
\sum\limits_{i=1}^{3}12^{3-i}13^{i}a^{i}a_{i}-12^{3}c=0.
\end{equation}

Motivated by the works in \cite{cs,len}, the aim of this paper is to verify the stability of periodic peaked solitons of
a $\mu$-version of the higher-order CH equation \eqref{mma}. In particular, the explicit structure of the peakons ensures
that the differences of the maximum height and $H^{1}$ energies determines the $H^{1}$ difference between any $H^{1}$
function $u$ and a bounded peakon $\varphi_{c}$, that is,
\begin{equation*}
\|u-\varphi_{c}\|_{H^{1}}^{2}\leq|H_{1}[u]-H_{1}[\varphi_{c}]|+|M_{u}-M_{\varphi_{c}}|.
\end{equation*}
For a solution $u$ to the higher-order $\mu$-CH equation, by use of the conservation law $H_{1}[u]$ the problem
of the orbital stability further reduces to the one of stability of the control
of $|M_{u}-M_{\varphi_{c}}|$, which can be achieved by a construction of a Lyapunov function through utilizing
the other two conservation laws.

If waves such as the peakons are to be observable in nature, they need to be stable under small perturbations.
Therefore, the stability of the peakons is of great interest. In an innovative paper \cite{cs},
Constantin and Strauss proved that the single peakons of the CH equation \eqref{mme} are orbitally stable by utilizing the conservation
laws and the feature of the peakons. This approach was further developed in \cite{ll,qll}.
Constantin and Molinet \cite{cm} proposed a variational approach to demonstrate the orbital stability of the peakons.
Applying the methods in \cite{cs,mmt}, it was shown in \cite{dm} that the trains of peaked solitons of the CH equation \eqref{mme}
are orbitally stable. Meanwhile, in \cite{len}, Lenells studied the stability of the periodic peaked solitons for the CH equation.
The approach in \cite{len} was recently extended to investigate the orbital stability of the periodic peakons for
classical integrable equations such as the $\mu$-CH equation \cite{cll}, the modified CH equation \cite{qll},
the modified $\mu$-CH equation \cite{lqz}, the generalized $\mu$-CH equation \cite{qzll} and the generalized modified CH equation \cite{moon-1}.

In this paper, we shall verify the following stability of periodic peaked solitons of
a $\mu$-version of the higher-order CH equation \eqref{mma}.

\begin{theorem}\label{t3.1}
Assume that $a>0$ and $a_{i}$, $i=1,2,3$ satisfy one of the following five conditions

$(i)$ $a_{1}>0$, $a_{2}>0$, $a_{3}>0$.

$(ii)$ $a_{2}<0$, $a_{2}^{2}<3a_{1}a_{3}$, $a_{3}>0$.

$(iii)$ $a_{1}>0$, $a_{2}>-\frac{6a_{1}}{13a}$, $a_{3}=0$.

$(iv)$ $a_{1}=0$, $a_{2}>0$, $a_{3}=0$.

$(v)$ $a_{1}<0$, $a_{2}>-\frac{a_{1}}{2a}$, $a_{3}=0$,

\noindent with the wave speed $c$ satisfies \eqref{mmi}. For every $\varepsilon>0$, there exists $\delta>0$ such that if $u\in C([0,T),H^{s}(\mathbb{S}^{1}))\cap C^{1}([0,T),H^{s-1}(\mathbb{S}^{1}))$,
$s>\frac{3}{2}$ is a solution to Eq. \eqref{mma} with
\begin{equation*}
\|u(\cdot,0)-\varphi_{c}\|_{H^{1}(\mathbb{S}^{1})}<\delta,
\end{equation*}
then
\begin{equation*}
\left\|u(\cdot,t)-\varphi_{c}\left(\cdot-\xi(t)+\frac{1}{2}\right)\right\|_{H^{1}(\mathbb{S}^{1})}<\varepsilon,
\quad\quad\quad \text{for}\quad t\in[0,T),
\end{equation*}
where $\xi(t)\in\mathbb{R}$ is any point at which the function $u(\cdot,t)$ attains its maximum.
\end{theorem}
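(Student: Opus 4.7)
The plan is to adapt the Constantin--Strauss approach for periodic peakons, as developed by Lenells \cite{len} and refined in \cite{cll,lqz,qll,qzll,moon-1}, to the higher-order equation \eqref{mma}. The scheme decomposes into three analytic pieces: a structural inequality bounding $\|u-\varphi_c\|_{H^1}$ by differences of conserved quantities and extrema, a polynomial inequality controlling the maximum $M_u$ of $u$ by three conservation laws, and a continuity argument that propagates smallness from $t=0$ to $t\in[0,T)$.

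First, I would record the three conservation laws of \eqref{mma} from \cite{wlfq}. The natural one is the $\mu$--energy
\begin{equation*}
H_1[u]=\mu(u)^2+\int_{\mathbb{S}^1}u_x^2\,dx,
\end{equation*}
together with $H_0[u]=\mu(u)$ and a cubic/quartic density $H_2[u]$ whose coefficients depend on $a_1,a_2,a_3$. Evaluating these on the peakon $\varphi_c$ defined in \eqref{mmh}--\eqref{lblb} with $a$ given by \eqref{mmi} yields explicit reference values $H_1[\varphi_c]$, $H_2[\varphi_c]$, and $M_{\varphi_c}=\varphi_c(\tfrac12)$. I would then prove the \emph{decomposition inequality}
\begin{equation*}
\bigl\|u-\varphi_c\bigl(\cdot-\xi(t)+\tfrac12\bigr)\bigr\|_{H^1(\mathbb{S}^1)}^{2}\leq\bigl|H_1[u]-H_1[\varphi_c]\bigr|+C\bigl|M_u-M_{\varphi_c}\bigr|,
\end{equation*}
by translating so that the maximum sits at $x=\tfrac12$ and expanding the $H_\mu^1$--norm around $\varphi_c$, using that $\varphi_c$ is a critical point of the associated quadratic form subject to fixed $\mu$-mean and fixed height. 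This is essentially the argument from \cite{len,cll}, unchanged by the higher-order nonlinearity.

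Second, and this is the real heart of the proof, I would derive the polynomial inequality that closes the loop. Setting $M_u=\max u$ and $m_u=\min u$, one introduces auxiliary functions
\begin{equation*}
g(x)=u^2-u_x^2-2M_u u+c_1,\qquad h(x)=\text{analogous lower-bound variant},
\end{equation*}
and integrates weighted combinations $\int (\alpha_1 u^{k}+\alpha_2 u^{k-2}u_x^2+\cdots)g\,dx$ against $u$ and $u^2$, exploiting that $g$ has a definite sign on $\mathbb{S}^1$. After rearrangement these produce a polynomial relation
\begin{equation*}
F_{a_1,a_2,a_3}(M_u,m_u;\mu(u),H_1[u])\leq H_2[u],
\end{equation*}
with equality on $\varphi_c$. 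The five hypotheses (i)--(v) on $(a_1,a_2,a_3)$ are precisely the sign/size conditions ensuring that $F_{a_1,a_2,a_3}$ is convex along the relevant direction at $(M_{\varphi_c},m_{\varphi_c})$, so that closeness of the three conservation laws to their peakon values forces $|M_u-M_{\varphi_c}|$ to be small.

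Finally, combining the two inequalities with conservation of $H_0$, $H_1$, $H_2$ along the flow yields, for every $t\in[0,T)$,
\begin{equation*}
|M_{u(\cdot,t)}-M_{\varphi_c}|\leq\kappa\bigl(\|u(\cdot,0)-\varphi_c\|_{H^1}\bigr),
\end{equation*}
with $\kappa(\delta)\to 0$ as $\delta\to 0$; substituting into the decomposition inequality completes the argument. The principal obstacle is step two: the higher-order nonlinearities in \eqref{mma} make the auxiliary-function calculus noticeably heavier than in the $\mu$-CH or modified $\mu$-CH cases, and the five-case split in the hypotheses mirrors the different dominant cubic/quartic terms one must exploit to make $F_{a_1,a_2,a_3}$ a true Lyapunov control. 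Verifying this convexity case by case, and identifying the tight constants so that the equality on $\varphi_c$ is actually attained, will be the most delicate computational step.
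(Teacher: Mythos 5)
Your overall architecture coincides with the paper's: your decomposition inequality is Lemma \ref{lem3.2} (stated in the $\mu$-norm, which Lemma \ref{lem3.6} converts to $H^{1}$), your polynomial inequality with equality at the peakon is Lemmas \ref{lem3.3}--\ref{lem3.4}, and your propagation step is Lemma \ref{lem3.8}. The route is therefore the same one the paper takes; the problem is that the step you yourself single out as the heart of the proof is left in a form that would not work as written.

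Concretely, the auxiliary function you propose, $g(x)=u^{2}-u_{x}^{2}-2M_{u}u+c_{1}$, is not the right object in the periodic $\mu$-setting and has no reason to be sign-definite. The construction that actually closes the argument (see \eqref{cac} and \eqref{lala}) takes
\begin{equation*}
g(x)=u_{x}\pm\sqrt{2\mu(u)\left(u-m_{u}\right)}
\end{equation*}
on the two arcs joining the maximizer to the minimizer, chosen precisely so that $g\equiv 0$ on $\varphi_{c}$ because the peakon satisfies $\varphi_{c,x}^{2}=2\mu(\varphi_{c})(\varphi_{c}-m_{\varphi_{c}})$; one then evaluates $\int_{\mathbb{S}^{1}}g^{2}\,dx$ and $\int_{\mathbb{S}^{1}}(a_{1}u+a_{2}u^{2}+a_{3}u^{3})g^{2}\,dx$ exactly (the latter via explicit antiderivatives of $u^{k}u_{x}\sqrt{2\mu(u)(u-m_{u})}$), and the single inequality $\int h(u)g^{2}\leq h(M_{u})\int g^{2}$ yields $E_{u}(M_{u},m_{u})\geq 0$ with equality on $\varphi_{c}$. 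Without this specific choice the tight equality you need at $\varphi_{c}$ is not attained. A second gap: the resulting functional $E_{u}$ unavoidably contains $\int_{\mathbb{S}^{1}}u^{2}\,dx$ and $\int_{\mathbb{S}^{1}}u^{3}\,dx$, which are \emph{not} conserved, so your claim that the polynomial relation depends only on $M_{u}$, $m_{u}$, $\mu(u)$, $H_{1}[u]$ (and hence is automatically time-independent) does not survive the higher-order nonlinearity; controlling these non-conserved terms is one of the genuinely new difficulties here. Finally, the five cases (i)--(v) do not enter as abstract convexity conditions but through the explicit Hessian of $E_{\varphi_{c}}$ at $(M_{\varphi_{c}},m_{\varphi_{c}})$ computed in Lemma \ref{lem3.4}, whose diagonal entries $-aa_{1}-\frac{13}{6}a^{2}a_{2}-\frac{169}{48}a^{3}a_{3}$ and $-aa_{1}-2a^{2}a_{2}-\frac{721}{240}a^{3}a_{3}$ must be negative, together with the need for $h(u)\leq h(M_{u})$ on the range of $u$; these computations are exactly the content your sketch defers.
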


In addition, it is noticed that Eq. \eqref{mma} possesses at least three conservation laws defined by
\begin{equation}\label{mmb}
\begin{split}
&H_{0}[u]=\mu(u),\\
&H_{1}[u]=\frac{1}{2}\left[\int_{\mathbb{S}^{1}}u_{x}^{2}dx+\mu^{2}(u)\right],\\
&H_{2}[u]=\frac{1}{2}\int_{\mathbb{S}^{1}}\left[\sum\limits_{i=1}^{3}a_{i}u^{i}(2\mu(u)u
+u_{x}^{2})-\frac{23}{12}\mu^{2}(u)\sum\limits_{i=1}^{2}a_{i+1}u^{i+1}\right]dx,\\
\end{split}
\end{equation}
which play a key role in proving the orbital stability of peakons. While the corresponding three
useful conservation laws of the $\mu$-CH equation are in the following:
\begin{equation*}
F_{0}[u]=H_{0}[u],\quad\quad F_{1}[u]=H_{1}[u],\quad\quad
F_{2}[u]=\int_{\mathbb{S}^{1}}\left(\mu(u)u^{2}+\frac{1}{2}uu_{x}^{2}\right)dx.
\end{equation*}

The stability of periodic peaked solitons of the higher-order $\mu$-CH equation \eqref{mma}
is an interesting issue investigated in this paper. The approach of studying this issue is
inspired by the recent works \cite{cs,len,lqz}. It is crucial to establish a suitable inequality relating the maximum
and minimum of the perturbed solution with the conserved densities, which will depend on the
introduction of an appropriately constructed auxiliary function. Besides, the corresponding equality
needs to hold at the periodic peakons. This condition is very important because stable periodic
peakons must be critical points of the energy function with the momentum constrain, and satisfy
the corresponding Euler-Lagrangian equations.

However, we encounter some new difficulties. Firstly, it is found that the conservation law $H_{2}$ of
the higher-order $\mu$-CH equation is much more complex than $F_{2}$ of the $\mu$-CH equation.
Therefore, compared with the case of the $\mu$-CH equation in \cite{cll},
the stability issue of periodic peakons of the higher-order $\mu$-CH equation is more
subtle to deal with because the latter one involves more complicated calculations.
Secondly, the nonlinearity in Eq. \eqref{mma} is higher-order, which makes
the dynamical analysis more delicate.
Finally, the most difficult part is to construct two
suitable auxiliary functions $g(x)$ and $h(x)$, which related to three conservation laws
$H_{0}[u]$, $H_{1}[u]$ and $H_{2}[u]$. Further, we can obtain the real-valued function $E_{u}(M,m)$
with respect to the maximum and minimum of the perturbed solution with the conserved densities.
However, due to the complexity of the real-valued function $E_{u}(M,m)$, it is difficult to calculate
its partial derivatives up to the second order at the critical point $(M_{\varphi_{c}},m_{\varphi_{c}})$,
see Lemma \ref{lem3.4}.

The remainder of the paper is organized as follows. In Section 2, the well-posedness and the structure
of periodic peaked solitons of the higher-order $\mu$-CH equation are briefly reviewed on. Section 3 is devoted to proving that the periodic peakons of the higher-order $\mu$-CH equation are dynamically stable under small perturbations in the Sobolev space $H^{1}(\mathbb{S}^{1})$.\\

\noindent{\bf Notation.}
Throughout, the norm of a Banach space $Z$ is denoted by $\|\cdot\|_{Z}$. For the sake of simplicity,
we drop $\mathbb{S}^{1}$ in our notations of function spaces if there is no ambiguity because
all space of functions are over $\mathbb{S}^{1}$.

\renewcommand{\theequation}{\thesection.\arabic{equation}}
\setcounter{equation}{0}
\section{Preliminaries}

We are concerned with the following Cauchy problem associated with the equation \eqref{mma} on the unit circle $\mathbb{S}^{1}$, that is,
\begin{equation}\label{baa}
\begin{cases}
&u_{t}+\sum\limits_{i=1}^{3}a_{i}u^{i}u_{x}
+(\mu-\partial_{x}^{2})^{-1}\partial_{x}\Bigg(\sum\limits_{i=1}^{3}a_{i}u^{i-1}\left(\left(i+1\right)\mu(u)u+\frac{1}{2}iu_{x}^{2}\right)\\
&\quad-\frac{23}{24}\sum\limits_{i=1}^{2}(i+1)a_{i+1}\mu^{2}(u)u^{i}\Bigg)=0,\quad t>0,\quad x\in\mathbb{\mathbb{R}},\\
&u(0,x)=u_{0}(x),\quad\quad m=\mu(u)-u_{xx},\quad\quad x\in\mathbb{R},\\
&u(t,x+1)=u(t,x),\quad\quad t\geq0,\quad\quad x\in\mathbb{R}.
\end{cases}
\end{equation}

For an integer $n\geq 1$, we define $H^{n}$ as the Sobolev space of all square integrable functions
$f\in L^{2}$ with distributional derivatives $\partial_{x}^{i}f\in L^{2}$ for $i=1,...,n$.
The norm on $H^{n}$ is given by
\begin{equation*}
\|f\|_{H^{n}(\mathbb{S}^{1})}^{2}=\sum\limits_{i=0}^{n}\int_{\mathbb{S}^{1}}(\partial_{x}^{i}f)^{2}(x)dx.
\end{equation*}

First, we give the following definition of the strong solutions.

\noindent{\bf Definition 2.1}
If $u\in C([0,T); H^{s})\cap C^{1}([0,T); H^{s-1})$ with $s>\frac{3}{2}$ and some $T>0$ satisfies \eqref{baa},
then $u$ is called a strong solution of \eqref{baa} on $[0,T)$. If $u$ is a strong solution on $[0,T)$
for every $T>0$, then it is called a global strong solution.

We now recall the local well-posedness and properties for strong solutions of \eqref{baa} on
the unit circle proved in \cite{wlfq}.

\begin{lemma}\label{lem2.1} \cite{wlfq}
Let $u_{0}\in H^{s}$ with $s>3/2$. Then there exists $T>0$ such that the Cauchy
problem \eqref{baa} has a unique strong solution $u\in C([0,T),H^{s})\cap C^{1}([0,T),H^{s-1})$
and the map $u_{0}\mapsto u$ is continuous from a neighborhood of $u_{0}\in H^{s}$ into
$C([0,T),H^{s})\cap C^{1}([0,T),H^{s-1})$.
\end{lemma}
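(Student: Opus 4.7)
The plan is to apply Kato's semigroup framework for quasilinear evolution equations, which is the standard route to local well-posedness for Camassa-Holm-type equations on the circle. The first step is to recast \eqref{baa} in the abstract form
$$\frac{du}{dt} + A(u)u = F(u), \qquad u(0) = u_{0},$$
where the principal part $A(u) = \bigl(\sum_{i=1}^{3} a_{i} u^{i}\bigr)\partial_{x}$ is a first-order transport operator whose coefficient depends on $u$, and the remainder $F(u)$ collects the nonlocal lower-order contribution
$$F(u) = -(\mu-\partial_{x}^{2})^{-1}\partial_{x}\Bigl(\sum_{i=1}^{3} a_{i} u^{i-1}\bigl((i+1)\mu(u)u + \tfrac{1}{2} i u_{x}^{2}\bigr) - \tfrac{23}{24}\sum_{i=1}^{2}(i+1) a_{i+1}\mu^{2}(u) u^{i}\Bigr).$$

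The key structural observation is that on $\mathbb{S}^{1}$ the Fourier multiplier $(\mu-\partial_{x}^{2})^{-1}\partial_{x}$ is of order $-1$, hence bounded $H^{s-1}\to H^{s}$. Combined with the algebra property of $H^{s-1}$ (valid since $s>3/2$), this shows $F:H^{s}\to H^{s}$ is locally Lipschitz: the only potentially problematic factor, $u_{x}^{2}$, lies in $H^{s-1}$, and the nonlocal operator restores one derivative before $F$ is evaluated.

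I would then verify Kato's three hypotheses in the scale $Y=H^{s}$, $X=H^{s-1}$. Quasi-$m$-accretivity of $A(u)$ on $X$ follows because $A(u)$ is a first-order transport operator whose coefficient lies in $W^{1,\infty}$ by Sobolev embedding. The second hypothesis, which requires that $[\Lambda^{s},A(u)]\Lambda^{-s}$ extends to a bounded operator on $X$ with norm $\lesssim\|u\|_{Y}$ (here $\Lambda^{s}$ denotes a fixed isomorphism $H^{s}\to L^{2}$), reduces to iterated Moser / Kato-Ponce commutator estimates applied to the powers $u,u^{2},u^{3}$ of the transport coefficient. Lipschitz continuity of $u\mapsto A(u)$ from $Y$ into $\mathcal{L}(Y,X)$ and of $u\mapsto F(u)$ from $Y$ into $Y$ is again immediate from the algebra property together with the order $-1$ smoothing. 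Kato's theorem then delivers a unique local solution $u\in C([0,T);H^{s})\cap C^{1}([0,T);H^{s-1})$; the continuous dependence of the solution on $u_{0}$ is obtained through a Bona-Smith-type argument (mollify the data to get a family of smooth solutions, propagate the uniform $H^{s}$ bound on a common time interval, and close the argument with a lower-norm difference estimate).

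The main obstacle is bookkeeping rather than conceptual novelty: the higher-order nonlinearity $\sum_{i=1}^{3}a_{i}u^{i}\partial_{x}$ forces one to iterate the Moser product estimate three times inside the commutator expansion, and one must verify carefully that the $+1$ smoothing of $(\mu-\partial_{x}^{2})^{-1}\partial_{x}$ is strong enough to absorb both the quadratic term $u_{x}^{2}$ and the cubic-in-$u$ factors appearing in $F(u)$. Once these estimates are in hand, the conclusion follows from the general Kato theorem in precisely the same way as for the $\mu$-CH and modified $\mu$-CH equations cited in the introduction.
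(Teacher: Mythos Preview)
Your approach is correct and is the standard route to local well-posedness for equations of this type. Note, however, that the paper itself does not supply a proof of this lemma: it is simply quoted from the reference \cite{wlfq}, where the well-posedness of \eqref{baa} is established. The argument in \cite{wlfq} proceeds exactly along the lines you describe, namely Kato's quasilinear semigroup theory applied to the transport form $u_{t}+A(u)u=F(u)$ with $A(u)=\bigl(\sum_{i=1}^{3}a_{i}u^{i}\bigr)\partial_{x}$ and with the nonlocal remainder $F$ regularized by the order~$-1$ operator $(\mu-\partial_{x}^{2})^{-1}\partial_{x}$. So there is no discrepancy to report: your sketch reproduces the expected proof, and the paper defers the details to the cited source.
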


\begin{lemma}\label{lem2.2}
The Hamiltonian functionals \eqref{mmb} are conserved for the strong solution $u$ in Lemma \ref{lem2.1},
that is, for all $t\in [0,T)$
\begin{equation}\label{bab}
\begin{split}
&\frac{d}{dt}H_{0}[u]=\frac{d}{dt}\mu(u)=0,\\
&\frac{d}{dt}H_{1}[u]=\frac{1}{2}\frac{d}{dt}\left[\int_{\mathbb{S}^{1}}u_{x}^{2}dx+\mu^{2}(u)\right]=0,\\
&\frac{d}{dt}H_{2}[u]=\frac{1}{2}\frac{d}{dt}\int_{\mathbb{S}^{1}}\left[\sum\limits_{i=1}^{3}a_{i}u^{i}(2\mu(u)u
+u_{x}^{2})-\frac{23}{12}\mu^{2}(u)\sum\limits_{i=1}^{2}a_{i+1}u^{i+1}\right]dx=0.\\
\end{split}
\end{equation}
\end{lemma}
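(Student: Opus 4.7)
My plan is to verify each of the three identities by differentiating under the integral, substituting the PDE for $u_t$, and simplifying via integration by parts together with the $L^2$--self-adjointness of $(\mu-\partial_x^2)^{-1}$ on periodic functions. The regularity $u\in C([0,T);H^s)\cap C^1([0,T);H^{s-1})$ with $s>3/2$ is sufficient to justify every manipulation.

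For $H_0$, I integrate Eq.~(2.1) over $\mathbb{S}^1$: the polynomial drift $\sum a_i u^i u_x$ is a total $x$-derivative, and for the nonlocal piece one observes that if $G=(\mu-\partial_x^2)^{-1}\partial_x F$ then $\mu(G)-G_{xx}=\partial_x F$, which upon integration forces $\mu(G)=0$. Hence $\frac{d}{dt}\mu(u)=\int_{\mathbb{S}^1}u_t\,dx=0$.

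For $H_1$, I use the identity $H_1[u]=\tfrac12\int u m\,dx$ with $m=\mu(u)-u_{xx}$, together with the symmetrization $\int u_t m\,dx=\int u m_t\,dx$, which yields $\frac{d}{dt}H_1[u]=\int m u_t\,dx$. Writing the PDE as $u_t=-A(u)-(\mu-\partial_x^2)^{-1}\partial_x B(u)$ with $A(u)=\sum a_i u^i u_x$ and $B(u)$ the parenthesized expression in (2.1), self-adjointness gives $\int m\,(\mu-\partial_x^2)^{-1}\partial_x B=\int u\,\partial_x B=-\int u_x B$. Expanding $\int m A(u)$ via $m=\mu(u)-u_{xx}$ kills the $\mu(u)$-contribution (since $\int u^i u_x=0$) and, after integration by parts, produces $-\tfrac12\sum ia_i\int u^{i-1}u_x^3\,dx$; these cancel exactly against the corresponding cubic terms extracted from $-\int u_x B$, while the $\mu(u)$- and $\mu^2(u)$-pieces of $B$ vanish for the same reason.

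For $H_2$ the strategy is identical but the arithmetic is much heavier. I first compute $\frac{\delta H_2}{\delta u}$, treating the $\mu(u)$-dependence of the density carefully, and eliminate second derivatives by $u_{xx}=\mu(u)-m$. This produces $\frac{\delta H_2}{\delta u}=C+V$, where $C$ is a spatial constant (so $\int C u_t\,dx=0$ by the already-established conservation of $H_0$) and
\[
V=m\sum_{i=1}^{3}a_i u^i+\mu(u)\sum_{i=1}^{3}ia_i u^i-\tfrac12\sum_{i=1}^{3}ia_i u^{i-1}u_x^2-\tfrac{23}{24}\mu^2(u)\sum_{i=1}^{2}(i+1)a_{i+1}u^i.
\]
Showing $\int V u_t\,dx=0$ then reduces, upon substituting $u_t$ from the PDE and using self-adjointness to transfer $(\mu-\partial_x^2)^{-1}$ off the nonlocal factor, to an identity among polynomial integrals in $u,u_x,\mu(u)$. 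The main obstacle is the combinatorial bookkeeping: one must verify that the specific coefficient $\tfrac{23}{24}$ appearing in (2.1) is exactly what forces the $\mu^2(u)$-contributions generated by the $(i+1)\mu(u)u$ factor of $B$ to cancel the explicit $\mu^2(u)$-term of $V$, and that the cubic contributions $u^{i-1}u_x^3$ produced by the $\tfrac{i}{2}u_x^2$ factor of $B$ cancel those coming from the $u_x^2$-piece of $V$, uniformly across $i=1,2,3$. No single manipulation is deep; the difficulty is coordinating all of these cancellations simultaneously.
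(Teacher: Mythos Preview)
The paper does not actually prove Lemma~2.2; it is stated without proof (the conservation laws are taken from~\cite{wlfq}, where the equation was introduced). So there is no argument in the paper to compare yours against.

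Your approach is the standard direct verification and is sound. The $H_0$ and $H_1$ arguments are essentially complete and correct; in particular your use of $H_1[u]=\tfrac12\int u\,m\,dx$ together with the self-adjointness of $(\mu-\partial_x^2)^{-1}$ to reduce $\int m\,(\mu-\partial_x^2)^{-1}\partial_x B$ to $-\int u_x B$ is exactly what is needed, and the $u^{i-1}u_x^3$ cancellation goes through as you say. For $H_2$, your computation of the non-constant part $V$ of $\delta H_2/\delta u$ is correct, and the observation that the constant part pairs to zero against $u_t$ by conservation of $H_0$ is the right way to dispose of the nonlocal $\mu$-dependence. What remains is purely mechanical: substituting $u_t$ and checking that the resulting polynomial integrands cancel term by term. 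You have described but not executed this step; since the paper itself omits the proof entirely, your proposal already goes further than the text, but to be a complete proof you would need to write out that final cancellation (or, alternatively, cite~\cite{wlfq} as the paper implicitly does).
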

Besides, if $m_{0}(x)=(\mu-\partial_{x}^{2})u_{0}(x)$ does not change sign, then $m(t,x)$ will not change
sign for any $t\in[0,T)$. From $m_{0}(x)\geq 0$, we deduce that the corresponding solution $u(t,x)$ is
positive for $(t,x)\in[0,T)\times \mathbb{S}^{1}$.

Note that Eq. \eqref{mma} can also be equivalently rewritten as the fully nonlinear partial differential
equation as follows

\begin{equation}\label{bac}
\begin{split}
u_{t}&+\sum\limits_{i=1}^{3}a_{i}u^{i}u_{x}
+\phi_{x}\ast\Bigg(\sum\limits_{i=1}^{3}a_{i}u^{i-1}\left(\left(i+1\right)\mu(u)u+\frac{1}{2}iu_{x}^{2}\right)\\
&-\frac{23}{24}\sum\limits_{i=1}^{2}(i+1)a_{i+1}\mu^{2}(u)u^{i}\Bigg)=0.\\
\end{split}
\end{equation}
Recall that
\begin{equation*}
u=A^{-1}m=\phi\ast m,
\end{equation*}
where $\phi$ is the Green function of the operator $A=\mu-\partial_{x}^{2}$, defined by
\begin{equation*}
\phi(x)=\frac{1}{2}\left(x-\frac{1}{2}\right)^{2}+\frac{23}{24}.
\end{equation*}
Its derivative can be assigned to zero at $x=0$, so it follows that
\begin{align*}
\phi_{x}(x)=
\begin{cases}
 0,&\quad\quad x=0,\\
x-\frac{1}{2},&\quad\quad 0<x<1.
\end{cases}
\end{align*}

In fact, from the above formulation, we define the notion of weak solutions of \eqref{baa} as follows.

\noindent{\bf Definition 2.2}
Given initial data $u_{0}\in W^{1,4}$, the function $u\in L^{\infty}([0,T); W^{1,4})$ is
said to be a weak solution to the initial-value problem \eqref{baa} if it satisfies
the following identity:
\begin{equation*}
\begin{split}
\int_{0}^{T}&\int_{\mathbb{S}^{1}}\Bigg[-u\psi_{t}-\sum\limits_{i=1}^{3}\frac{1}{i+1}a_{i}u^{i+1}\psi_{x}
+\phi_{x}\ast\Bigg(\sum\limits_{i=1}^{3}a_{i}u^{i-1}\left(\left(i+1\right)\mu(u)u+\frac{1}{2}iu_{x}^{2}\right)\\
&-\frac{23}{24}\sum\limits_{i=1}^{2}(i+1)a_{i+1}\mu^{2}(u)u^{i}\Bigg)\psi\Bigg]dxdt
+\int_{\mathbb{S}^{1}}u_{0}(x)\psi(0,x)dx=0,\\
\end{split}
\end{equation*}
for any smooth test function $\psi(t,x)\in C_{c}^{\infty}([0,T)\times \mathbb{S}^{1})$. If $u$ is a weak solution
on $[0,T)$ for every $T>0$, then it is called a global weak solution.

\renewcommand{\theequation}{\thesection.\arabic{equation}}
\setcounter{equation}{0}
\section{Proof of Orbital Stability}
Our attention in this section is now turned to the issue of the stability of periodic peakons to the
higher-order $\mu$-CH equation \eqref{mma}. The proof of the stability of periodic peakons
is approached via a series of lemmas. The following lemma summarizes the properties of the periodic peakons.

\begin{lemma}\label{lem3.1}
The periodic peakon $\varphi_{c}(x)$ defined in \eqref{mmh}-\eqref{lblb} is continuous on $\mathbb{S}^{1}$ with peak
at $x=\pm\frac{1}{2}$. The extrema of $\varphi_{c}$ are
\begin{equation*}
M_{\varphi_{c}}=\max\limits_{x\in\mathbb{S}^{1}}\varphi_{c}(x)=\varphi_{c}\left(\frac{1}{2}\right)=\frac{13}{12}a=\frac{13}{12}H_{0}[\varphi_{c}],
\end{equation*}
\begin{equation*}
m_{\varphi_{c}}=\min\limits_{x\in\mathbb{S}^{1}}\varphi_{c}(x)=\varphi_{c}(0)=\frac{23}{24}a=\frac{23}{24}H_{0}[\varphi_{c}].
\end{equation*}
Moreover, we obtain
\begin{equation*}
\lim\limits_{x\uparrow\frac{1}{2}}\varphi_{c,x}(x)=\frac{1}{2}a=\frac{1}{2}H_{0}[\varphi_{c}],\quad
\lim\limits_{x\downarrow-\frac{1}{2}}\varphi_{c,x}(x)=-\frac{1}{2}a=-\frac{1}{2}H_{0}[\varphi_{c}]
\end{equation*}
and
\begin{equation*}
H_{0}[\varphi_{c}]=\mu(\varphi_{c})=a>0,\quad\quad
H_{1}[\varphi_{c}]=\frac{13}{24}a^{2}=\frac{13}{24}H_{0}^{2}[\varphi_{c}],
\end{equation*}
\begin{equation*}
\begin{split}
H_{2}[\varphi_{c}]&=\frac{47}{45}a^{3}a_{1}+\frac{5387}{60480}a^{4}a_{2}+\frac{16733}{181440}a^{5}a_{3}\\
&=\frac{47}{45}a_{1}H_{0}^{3}[\varphi_{c}]+\frac{5387}{60480}a_{2}H_{0}^{4}[\varphi_{c}]+\frac{16733}{181440}a_{3}H_{0}^{5}[\varphi_{c}].
\end{split}
\end{equation*}

\end{lemma}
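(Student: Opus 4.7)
The lemma is entirely a matter of direct computation from the explicit formula $\varphi(x) = \frac{1}{2}\bigl(x^{2} + \frac{23}{12}\bigr)$ on $[-\frac{1}{2}, \frac{1}{2}]$ (extended $1$-periodically), together with $\varphi_{c} = a\varphi$ and the definitions in \eqref{mmb}. My plan is to treat the four blocks of assertions in the order stated.

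For the extrema and one-sided derivatives: since $\varphi$ is an even upward parabola on the fundamental period, its minimum on $[-\frac{1}{2}, \frac{1}{2}]$ is at $x=0$ and its maximum at the endpoints, with $\varphi(\pm \frac{1}{2}) = \frac{13}{12}$ and $\varphi(0) = \frac{23}{24}$; the formulas for $M_{\varphi_{c}}$ and $m_{\varphi_{c}}$ follow after multiplying by $a$. Continuity of the periodic extension at $\pm \frac{1}{2}$ is automatic because $\varphi(-\frac{1}{2}) = \varphi(\frac{1}{2})$, and the fact that this point is a peak is witnessed by the jump in the derivative. On the open interval one has $\varphi'(x) = x$, so the claimed one-sided limits $\pm \frac{a}{2}$ are immediate after rescaling by $a$.

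To verify the three conservation quantities, I would first tabulate once and for all the moments
\begin{equation*}
I_{j} := \int_{-1/2}^{1/2}\varphi(x)^{j}\, dx \ \ (j=1,\dots,5), \qquad J_{i} := \int_{-1/2}^{1/2}\varphi(x)^{i}\, (\varphi'(x))^{2}\, dx \ \ (i=0,1,2,3),
\end{equation*}
each of which is an elementary polynomial integral obtained by expanding $(x^{2}+\frac{23}{12})^{k}$ via the binomial theorem and using $\int_{-1/2}^{1/2}x^{2k}\, dx = \frac{1}{(2k+1)\,4^{k}}$. A direct calculation gives $\mu(\varphi_{c}) = a I_{1} = a$ and
\begin{equation*}
H_{1}[\varphi_{c}] = \tfrac{1}{2}\bigl(a^{2} J_{0} + a^{2}\bigr) = \tfrac{a^{2}}{2}\bigl(\tfrac{1}{12} + 1\bigr) = \tfrac{13}{24}a^{2},
\end{equation*}
as required.

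The computation of $H_{2}[\varphi_{c}]$ is the main (though purely arithmetic) obstacle. Substituting $\varphi_{c} = a\varphi$ and $\mu(\varphi_{c})=a$ into the definition in \eqref{mmb} and re-indexing the correction sum with $k=i+1$ collects all like powers of $a$ and yields
\begin{equation*}
H_{2}[\varphi_{c}] = a_{1} a^{3}\bigl(I_{2} + \tfrac{1}{2} J_{1}\bigr) + a_{2} a^{4}\bigl(I_{3} + \tfrac{1}{2} J_{2} - \tfrac{23}{24}I_{2}\bigr) + a_{3} a^{5}\bigl(I_{4} + \tfrac{1}{2} J_{3} - \tfrac{23}{24}I_{3}\bigr).
\end{equation*}
The claim therefore reduces to the three rational identities $I_{2} + \frac{1}{2}J_{1} = \frac{47}{45}$, $I_{3} + \frac{1}{2}J_{2} - \frac{23}{24}I_{2} = \frac{5387}{60480}$, and $I_{4} + \frac{1}{2}J_{3} - \frac{23}{24}I_{3} = \frac{16733}{181440}$, all of which follow from the same binomial expansion. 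The only real challenge is bookkeeping with the large common denominators; to avoid arithmetic mistakes I would express each $I_{j}$ and $J_{i}$ as a single fraction over a $(2k+1)4^{k}$-type denominator before combining. Reading off the final coefficients in terms of $H_{0}[\varphi_{c}]=a$ then produces the stated powers of $H_{0}[\varphi_{c}]$.
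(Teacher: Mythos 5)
Your proposal is correct and follows essentially the same route as the paper: both reduce every claim to elementary polynomial integrals of $\varphi$ and $\varphi'$ over $[-\frac12,\frac12]$ and then do the arithmetic (the paper simply expands the integrands in place rather than tabulating the moments $I_j$, $J_i$ first). Your reduction $H_{2}[\varphi_{c}] = a_{1}a^{3}(I_{2}+\tfrac12 J_{1}) + a_{2}a^{4}(I_{3}+\tfrac12 J_{2}-\tfrac{23}{24}I_{2}) + a_{3}a^{5}(I_{4}+\tfrac12 J_{3}-\tfrac{23}{24}I_{3})$ and the quoted rational values check out against the paper's expansion.
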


\begin{proof}
The properties are easily derived from the expression of $\varphi_{c}(x)$
and the definition of $H_{0}$, $H_{1}$ and $H_{2}$. A direct calculation gives rise to
\begin{equation*}
H_{0}[\varphi_{c}]=\mu(\varphi_{c})=\int_{-\frac{1}{2}}^{\frac{1}{2}}a\left(\frac{1}{2}x^{2}+\frac{23}{24}\right)dx=a,
\end{equation*}

\begin{equation*}
\begin{split}
H_{1}[\varphi_{c}]&=\frac{1}{2}\left(\mu^{2}(\varphi_{c})+\int_{\mathbb{S}^{1}}\varphi_{c,x}^{2}(x)dx\right)
=\frac{1}{2}\left(a^{2}+\int_{-\frac{1}{2}}^{\frac{1}{2}}a^{2}x^{2}dx\right)\\
&=\frac{13}{24}a^{2}=\frac{13}{24}H_{0}^{2}[\varphi_{c}],\\
\end{split}
\end{equation*}

\begin{equation*}
\begin{split}
H_{2}[\varphi_{c}]&=\frac{1}{2}\int_{\mathbb{S}^{1}}\left[\sum\limits_{i=1}^{3}a_{i}\varphi_{c}^{i}
\left(2\mu(\varphi_{c})\varphi_{c}+\varphi_{c,x}^{2}\right)
-\frac{23}{12}\mu^{2}(\varphi_{c})\sum\limits_{i=1}^{2}a_{i+1}\varphi_{c}^{i+1}\right]dx\\
&=\left(a^{3}a_{1}-\frac{23}{24}a^{4}a_{2}\right)\int_{-\frac{1}{2}}^{\frac{1}{2}}\left(\frac{1}{4}x^{4}+\frac{23}{24}x^{2}+\frac{23^{2}}{24^{2}}\right)dx\\
&\quad+\left(a^{4}a_{2}-\frac{23}{24}a^{5}a_{3}\right)\int_{-\frac{1}{2}}^{\frac{1}{2}}
\left(\frac{1}{8}x^{6}+\frac{23}{32}x^{4}+\frac{3}{2}\times\frac{23^{2}}{24^{2}}x^{2}+\frac{23^{3}}{24^{3}}\right)dx\\
&\quad+\frac{1}{2}a^{3}a_{1}\int_{-\frac{1}{2}}^{\frac{1}{2}}\left(\frac{1}{2}x^{4}+\frac{23}{24}x^{2}\right)dx
+\frac{1}{2}a^{4}a_{2}\int_{-\frac{1}{2}}^{\frac{1}{2}}\left(\frac{1}{4}x^{6}+\frac{23}{24}x^{4}+\frac{23^{2}}{24^{2}}x^{2}\right)dx\\
&\quad+a^{5}a_{3}\int_{-\frac{1}{2}}^{\frac{1}{2}}\left(\frac{1}{8}x^{8}+\left(\frac{23}{48}+\frac{23}{64}\right)x^{6}
+\frac{9}{4}\times\frac{23^{2}}{24^{2}}x^{4}+\frac{5}{2}\times\frac{23^{3}}{24^{3}}x^{2}+\frac{23^{4}}{24^{4}}\right)dx\\
&=\frac{47}{45}a^{3}a_{1}+\frac{5387}{60480}a^{4}a_{2}+\frac{16733}{181440}a^{5}a_{3}\\
&=\frac{47}{45}a_{1}H_{0}^{3}[\varphi_{c}]+\frac{5387}{60480}a_{2}H_{0}^{4}[\varphi_{c}]+\frac{16733}{181440}a_{3}H_{0}^{5}[\varphi_{c}].\\
\end{split}
\end{equation*}
\end{proof}

We define the $\mu$-inner product $\langle\cdot,\cdot\rangle_{\mu}$ and the related $\mu$-norm $\|\cdot\|_{\mu}$ by
\begin{equation}\label{caa}
\begin{split}
&\langle u,v\rangle_{\mu}=\mu(u)\mu(v)+\int_{\mathbb{S}^{1}}u_{x}v_{x}dx,\\
&\|u\|_{\mu}^{2}=\langle u,u\rangle_{\mu}=2H_{1}[u],\quad\quad
u,v\in H^{1}(\mathbb{S}^{1}),\\
\end{split}
\end{equation}
and consider the expansion of the conservation law $H_{1}$ around the periodic peakon $\varphi_{c}$ in
the $\mu$-norm. In the following lemma, we will show that the error term in this expansion is given by
the product of $a$ and the difference between $\varphi_{c}$ and the perturbed solution $u$ at the point of the peak.

\begin{lemma}\label{lem3.2}
For every $u\in H^{1}(\mathbb{S}^{1})$ and $\xi\in\mathbb{R}$,
\begin{equation*}
H_{1}[u]-H_{1}[\varphi_{c}]=\frac{1}{2}\|u-\varphi_{c}\left(\cdot-\xi\right)\|_{\mu}^{2}
+a\left(u\left(\xi+\frac{1}{2}\right)-M_{\varphi_{c}}\right).
\end{equation*}
\end{lemma}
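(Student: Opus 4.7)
The plan is to expand the squared $\mu$-norm via polarization and reduce the identity to computing a single inner product. Since
\begin{equation*}
\tfrac{1}{2}\|u - \varphi_c(\cdot - \xi)\|_\mu^2 = H_1[u] + H_1[\varphi_c] - \langle u, \varphi_c(\cdot - \xi)\rangle_\mu,
\end{equation*}
the claim of the lemma is equivalent to
\begin{equation*}
\langle u, \varphi_c(\cdot - \xi)\rangle_\mu = 2H_1[\varphi_c] + a\bigl(u(\xi + 1/2) - M_{\varphi_c}\bigr).
\end{equation*}
By Lemma \ref{lem3.1}, one has $2H_1[\varphi_c] = \tfrac{13}{12}a^2 = aM_{\varphi_c}$, so the right-hand side collapses to $a\,u(\xi + 1/2)$. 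It therefore suffices to prove the single identity
\begin{equation*}
\langle u, \varphi_c(\cdot - \xi)\rangle_\mu = a\,u(\xi + 1/2). \qquad (\ast)
\end{equation*}

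For the second step, translation preserves the mean on $\mathbb{S}^1$, so $\mu(\varphi_c(\cdot - \xi)) = \mu(\varphi_c) = a$ by Lemma \ref{lem3.1}, and the left-hand side of $(\ast)$ equals
\begin{equation*}
a\,\mu(u) + \int_{\mathbb{S}^1} u_x(x)\,\varphi_{c,x}(x - \xi)\,dx.
\end{equation*}
I would evaluate the remaining integral on the fundamental period $[\xi - 1/2, \xi + 1/2]$. On this interval $\varphi_c(x-\xi) = a\bigl(\tfrac{1}{2}(x-\xi)^2 + \tfrac{23}{24}\bigr)$ is smooth with $\varphi_{c,xx}(x-\xi) \equiv a$, while by periodicity $\varphi_{c,x}(\cdot - \xi)$ has a jump of size $-a$ at the peak $x = \xi + 1/2$ (equal to $+a/2$ just to the left and $-a/2$ just to the right, by the one-sided limits in Lemma \ref{lem3.1}). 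Since $u \in H^1(\mathbb{S}^1) \hookrightarrow C(\mathbb{S}^1)$, an integration by parts collects a single boundary contribution equal to the jump times the pointwise value of $u$ at $\xi + 1/2$, producing
\begin{equation*}
\int_{\mathbb{S}^1} u_x(x)\,\varphi_{c,x}(x - \xi)\,dx = -a\,\mu(u) + a\,u(\xi + 1/2).
\end{equation*}

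Substituting this back yields $\langle u, \varphi_c(\cdot - \xi)\rangle_\mu = a\,u(\xi + 1/2)$, which is $(\ast)$, and the lemma follows. The only delicate point is the careful accounting of the singular contribution of $\varphi_{c,xx}$ at the peak, but no serious obstacle is anticipated because the peakon profile is explicit and $u$ is continuous by Sobolev embedding, so the pointwise evaluation $u(\xi + 1/2)$ is unambiguous. The real difficulty of the paper is presumably postponed to the subsequent lemmas, where one must control $|u(\xi + 1/2) - M_{\varphi_c}|$ using the remaining conservation laws $H_0[u]$ and $H_2[u]$.
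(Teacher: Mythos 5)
Your proof is correct and follows essentially the same route as the paper: expand $\tfrac12\|u-\varphi_c(\cdot-\xi)\|_\mu^2$ by polarization, integrate by parts using $\varphi_{c,xx}=a\bigl(1-\delta(\cdot-\tfrac12)\bigr)$ to pick up the point value $a\,u(\xi+\tfrac12)$, and close with $\mu(\varphi_c)=a$ and $2H_1[\varphi_c]=\tfrac{13}{12}a^2=aM_{\varphi_c}$. The only difference is organizational (you isolate the identity $\langle u,\varphi_c(\cdot-\xi)\rangle_\mu=a\,u(\xi+\tfrac12)$ first), and your accounting of the jump of $\varphi_{c,x}$ at the peak matches the paper's computation.
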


\begin{proof}
A direct computation gives
\begin{equation*}
\begin{split}
\frac{1}{2}\|u-\varphi_{c}\left(\cdot-\xi\right)\|_{\mu}^{2}
&=H_{1}[u]+H_{1}[\varphi_{c}(\cdot-\xi)]-\mu(u)\mu(\varphi_{c})-\int_{\mathbb{S}^{1}}u_{x}(x)\varphi_{c,x}(x-\xi)dx\\
&=H_{1}[u]+H_{1}[\varphi_{c}]-\mu(u)\mu(\varphi_{c})+\int_{\mathbb{S}^{1}}u(x+\xi)\varphi_{c,xx}(x)dx.\\
\end{split}
\end{equation*}
Since
\begin{equation}\label{cab}
\varphi_{c,xx}=a\left[1-\delta\left(x-\frac{1}{2}\right)\right],
\end{equation}
it follows that
\begin{equation*}
\int_{\mathbb{S}^{1}}u(x+\xi)\varphi_{c,xx}(x)dx=a\int_{\mathbb{S}^{1}}u(x)dx-au\left(\xi+\frac{1}{2}\right).
\end{equation*}
Applying $H_{0}[\varphi_{c}]=\mu(\varphi_{c})=a$, we deduce that
\begin{equation*}
\frac{1}{2}\|u-\varphi_{c}\left(\cdot-\xi\right)\|_{\mu}^{2}
=H_{1}[u]-H_{1}[\varphi_{c}]+a\left(\frac{13}{12}a-u\left(\xi+\frac{1}{2}\right)\right),
\end{equation*}
which completes the proof of Lemma \ref{lem3.2}.

\end{proof}
\begin{remark}\label{rem3.1}
For a wave profile $u\in H^{1}(\mathbb{S}^{1})$, the functional $H_{1}[u]$ stands for kinetic energy.
It follows from Lemma \ref{lem3.2} that if a wave $u\in H^{1}(\mathbb{S}^{1})$ has energy $H_{1}[u]$ and
height $M_{u}$ close to the peakon's energy and height, then the whole shape of $u$ is close to that
of the peakon. We can also derive from Lemma \ref{lem3.2} that the peakon has maximal height among all waves
of fixed energy. Indeed, if $u\in H^{1}(\mathbb{S}^{1})\subset C(\mathbb{S}^{1})$ is such that
$H_{1}[u]=H_{1}[\varphi_{c}]$ and $u(\xi)=\max_{x\in\mathbb{S}^{1}}u(x)$, then $u(\xi)\leq M_{\varphi_{c}}$.
The remarks are also the same with the $\mu$-CH, the modified $\mu$-CH, the generalized $\mu$-CH, the CH and the
modified CH equations \cite{cll,len,lqz,qll,qzll}.

\end{remark}

The following lemma plays a crucial role in establishing the result of the stability of periodic peakons.

\begin{lemma}\label{lem3.3}
For any function $u\in H^{1}(\mathbb{S}^{1})$ with $\mu(u)>0$, define the function
\begin{equation*}
E_{u}: \left\{(M,m)\in\mathbb{R}^{2}: M\geq m>0\right\}\rightarrow \mathbb{R},
\end{equation*}
by
\begin{equation*}
\begin{split}
&E_{u}(M,m)\\
&=\left(a_{1}M+a_{2}M^{2}+a_{3}M^{3}\right)\Bigg[H_{1}[u]+\frac{1}{2}H_{0}^{2}[u]
-mH_{0}[u]\\&\qquad\qquad\qquad\qquad\qquad\qquad -\frac{2}{3H_{0}[u]}\left(2H_{0}[u](M-m)\right)^{\frac{3}{2}}\Bigg]+a_{1}mH_{0}^{2}[u]-H_{2}[u]\\
&\quad+\left(H_{0}[u]m-\frac{23}{24}H_{0}^{2}[u]\right)a_{2}\int_{\mathbb{S}^{1}}u^{2}dx
 +\left(H_{0}[u]m-\frac{23}{24}H_{0}^{2}[u]\right)a_{3}\int_{\mathbb{S}^{1}}u^{3}dx\\
&\quad+\frac{2}{H_{0}[u]}\Bigg(\frac{2a_{1}}{15}m+\frac{a_{1}}{5}M+\frac{a_{2}}{7}M^{2}
+\frac{4a_{2}}{35}mM\\&\qquad\qquad+\frac{8a_{2}}{105}m^{2}+\frac{a_{3}}{9}M^{3}+\frac{2a_{3}}{21}mM^{2}+\frac{8a_{3}}{105}m^{2}M+\frac{16a_{3}}{315}m^{3}\Bigg)\left(2H_{0}[u](M-m)\right)^{\frac{3}{2}}.\\
\end{split}
\end{equation*}
Then
\begin{equation*}
E_{u}\left(M_{u},m_{u}\right)\geq 0,
\end{equation*}
where $M_{u}=\max_{x\in\mathbb{S}^{1}}\{u(x)\}$ and $m_{u}=\min_{x\in\mathbb{S}^{1}}\{u(x)\}$.
\end{lemma}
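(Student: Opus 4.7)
The plan is to view $E_u(M_u,m_u)\geq 0$ as the integrated form of a single weighted pointwise AM--GM inequality, cleaned up by the coarea formula and the $H_2$-identity. Set $m = m_u$, $M = M_u$, $\Delta = M-m$, $H_0 = H_0[u]$, $P(u) = a_1 u + a_2 u^2 + a_3 u^3$, and introduce the weight
\[
W(u) \;:=\; P(M) - P(u).
\]
A short algebraic check under each hypothesis (i)--(v) of Theorem~\ref{t3.1} shows $W(u)\geq 0$ on $[m,M]$: for (ii) the discriminant $4(a_2^2-3a_1 a_3)<0$ makes $P'(u)>0$ everywhere, while for (iii) and (v) one factors $W(u)=(M-u)\bigl[a_1+a_2(M+u)\bigr]$ and uses the sharp threshold on $a_2$ together with the peakon range $M\approx\tfrac{13}{12}a$, $m\approx\tfrac{23}{24}a$.

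Starting from the manifestly non-negative pointwise inequality $W(u)(|u_x|-\sqrt{2H_0(u-m)})^2\geq 0$ and integrating over $\mathbb{S}^1$, I would simplify each side separately. For the left side, I substitute $\int u_x^2\,dx = 2H_1-H_0^2$ together with the identity
\[
\int P(u)u_x^2\,dx \;=\; 2H_2 - 2H_0\!\int uP(u)\,dx + \tfrac{23}{12}H_0^2\!\int(a_2u^2+a_3u^3)\,dx,
\]
obtained by solving \eqref{mmb} for $\int P(u)u_x^2\,dx$. The crucial cancellation is that the uncontrolled moment $\int uP(u)\,dx$ (which carries $\int u^4$) cancels between $\int W(u)u_x^2\,dx$ and $2H_0\!\int W(u)(u-m)\,dx$, leaving
\[
\int_{\mathbb{S}^1}\!\!W(u)\bigl[u_x^2+2H_0(u-m)\bigr]\,dx \;=\; 2P(M)\bigl[H_1+\tfrac{1}{2}H_0^2-mH_0\bigr] + 2X,
\]
where $X := a_1 mH_0^2 - H_2 + (mH_0-\tfrac{23}{24}H_0^2)\bigl(a_2\!\int u^2+a_3\!\int u^3\bigr)$ is exactly the middle-line contribution to $E_u$. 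For the right side, continuity of $u$ on $\mathbb{S}^1$ with maximum $M$ and minimum $m$ forces every level set $\{u=s\}$ with $s\in(m,M)$ to have at least two elements, so the coarea formula yields
\[
\int_{\mathbb{S}^1}\!\!W(u)|u_x|\sqrt{u-m}\,dx \;\geq\; 2\int_m^M\!(P(M)-P(s))\sqrt{s-m}\,ds.
\]
Computing the last integral via $t=s-m$, expanding $P(t+m)$, integrating $t^{k+1/2}$, and using $\sqrt{2H_0}\,\Delta^{3/2}=\tfrac{1}{2H_0}(2H_0\Delta)^{3/2}$ rewrites the lower bound as $\tfrac{4}{H_0}\bigl[\tfrac{P(M)}{3}-Q_E(M,m)\bigr](2H_0\Delta)^{3/2}$, where $Q_E$ is precisely the polynomial $\tfrac{2a_1}{15}m+\tfrac{a_1}{5}M+\cdots+\tfrac{16a_3}{315}m^3$ appearing in the final parenthesis of $E_u$.

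Combining the two sides gives
\[
2P(M)\bigl[H_1+\tfrac{1}{2}H_0^2-mH_0\bigr]+2X \;\geq\; \tfrac{4}{H_0}\bigl[\tfrac{P(M)}{3}-Q_E(M,m)\bigr](2H_0\Delta)^{3/2},
\]
and dividing by $2$ and moving all terms to the left yields exactly $E_u(M_u,m_u)\geq 0$. The main technical obstacle is the polynomial bookkeeping in the coarea integral: one must verify that each of the nine rational coefficients $\tfrac{2a_1}{15},\tfrac{a_1}{5},\tfrac{a_2}{7},\tfrac{4a_2}{35},\tfrac{8a_2}{105},\tfrac{a_3}{9},\tfrac{2a_3}{21},\tfrac{8a_3}{105},\tfrac{16a_3}{315}$ in $Q_E$ arises correctly from re-expanding $\Delta^{k/2}$ ($k=3,5,7,9$) into monomials $M^i m^j$. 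A secondary, case-by-case obstacle is verifying $W(u)\geq 0$ on $[m,M]$ in cases (iii) and (v), where the positivity of the affine factor $a_1+a_2(M+u)$ must be traced back to the sharp thresholds on $a_2$.
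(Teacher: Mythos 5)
Your proposal is correct and arrives at exactly the inequality the paper proves, with the same algebraic core: the identity expressing $\int_{\mathbb{S}^1}P(u)u_x^2\,dx$ through $H_2[u]$ (so that the unmanageable moment $\int uP(u)\,dx$ drops out), and the explicit integration of $\int_m^M P(s)\sqrt{s-m}\,ds$ producing the nine coefficients of the polynomial in the last parenthesis of $E_u$ --- I checked the $a_1$ and $a_2$ blocks and they match. The one genuine difference is how the cross term is handled. The paper introduces the piecewise auxiliary function $g=u_x\pm\sqrt{2\mu(u)(u-m)}$ on the two arcs joining the maximum point $\xi$ to the minimum point $\eta$, and evaluates $\int u^iu_x\sqrt{2\mu(u)(u-m)}\,dx$ \emph{exactly} as boundary terms of explicit antiderivatives; this yields the identity $E_u(M_u,m_u)=\frac12\int_{\mathbb{S}^1}\bigl(P(M)-P(u)\bigr)g^2\,dx$, from which nonnegativity is read off. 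You instead start from $W(u)\bigl(|u_x|-\sqrt{2H_0(u-m)}\bigr)^2\ge0$ and control the cross term by the coarea (Banach indicatrix) formula together with the observation that a continuous periodic function hits every intermediate level at least twice; this gives an inequality rather than an identity, but the two level-set passages contribute exactly the paper's boundary terms, so the final bound is the same. What your route buys is that you never need to define $g$ piecewise or guess antiderivatives; what it costs is an appeal to the coarea formula for $H^1$ functions and, crucially, the requirement $W\ge0$ on $[m_u,M_u]$ \emph{already at the coarea step} (since you multiply by $N(s)\ge2$). The paper needs the same positivity to pass from $\int P(u)g^2$ to $P(M)\int g^2$, but states Lemma~3.3 without any hypothesis on the $a_i$ and never justifies it; your explicit factorization $W(u)=(M-u)[a_1+a_2(M+u)]$ in cases (iii) and (v) makes visible that the inequality really only holds for $(M,m)$ near the peakon values under those conditions, which is honest but should be flagged as a restriction on the domain of validity of the lemma rather than folded silently into the proof.
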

\begin{remark}\label{rem3.2}
It is obvious that the function $E_{u}$ depends on $u$ only through three conservation laws $H_{0}[u]$,
$H_{1}[u]$, $H_{2}[u]$, the $L^{2}$-norm and $L^{3}$-norm of u.
\end{remark}
\begin{proof}
Note that the peakon $\varphi_{c}$ satisfies the following differential equation
\begin{align*}
\partial_{x}\varphi_{c}(x)=
\begin{cases}
-\sqrt{2\mu(\varphi_{c})(\varphi_{c}-m_{\varphi_{c}})},&\quad\quad -\frac{1}{2}<x\leq0,\\
\sqrt{2\mu(\varphi_{c})(\varphi_{c}-m_{\varphi_{c}})},&\quad\quad 0\leq x<\frac{1}{2}.
\end{cases}
\end{align*}
Let $u\in H^{1}(\mathbb{S}^{1})\subset C(\mathbb{S}^{1})$ with $\mu(u)>0$. Define
$M=M_{u}=\max_{x\in\mathbb{S}^{1}}u(x)$, $m=m_{u}=\min_{x\in\mathbb{S}^{1}}u(x)$.
Let $\xi$ and $\eta$ be such that $u(\xi)=M$ and $u(\eta)=m$. We first define the real-valued function
$g(x)$ by
\begin{align}\label{cac}
g(x)=
\begin{cases}
u_{x}+\sqrt{2\mu(u)(u-m)},&\quad\quad \xi<x\leq \eta,\\
u_{x}-\sqrt{2\mu(u)(u-m)},&\quad\quad \eta\leq x<\xi+1,
\end{cases}
\end{align}
and extend it periodically to the real line. A direct calculation leads to
\begin{equation}\label{cad}
\frac{1}{2}\int_{\mathbb{S}^{1}}g^{2}(x)dx=H_{1}[u]+\frac{1}{2}H_{0}^{2}[u]-mH_{0}[u]-\frac{2}{3H_{0}[u]}\left(2H_{0}[u](M-m)\right)^{\frac{3}{2}}.
\end{equation}
On the other hand, we introduce the real-valued function $h(x)$ defined by
\begin{equation}\label{lala}
h(x)=a_{1}u+a_{2}u^{2}+a_{3}u^{3},\quad\quad \xi<x<\xi+1,
\end{equation}
and extend it periodically to the entire real line. It is inferred that
\begin{align}\label{caf}
&\int_{\mathbb{S}^{1}}\left(a_{1}u+a_{2}u^{2}+a_{3}u^{3}\right)g^{2}(x)dx\nonumber\\
&=\int_{\xi}^{\eta}\left(a_{1}u+a_{2}u^{2}+a_{3}u^{3}\right)\left(u_{x}^{2}+2\mu(u)(u-m)+2u_{x}\sqrt{2\mu(u)(u-m)}\right)dx\nonumber\\
&\quad+\int_{\eta}^{\xi+1}\left(a_{1}u+a_{2}u^{2}+a_{3}u^{3}\right)\left(u_{x}^{2}+2\mu(u)(u-m)-2u_{x}\sqrt{2\mu(u)(u-m)}\right)dx\nonumber\\
&=I_{1}+I_{2}.
\end{align}
By a simple computation, we deduce that

\begin{align}\label{cae}
I_{1}&=\int_{\xi}^{\eta}\bigg(a_{1}uu_{x}^{2}+a_{2}u^{2}u_{x}^{2}+a_{3}u^{3}u_{x}^{2}
+2a_{1}\mu(u)u^{2}+2a_{2}\mu(u)u^{3}+2a_{3}\mu(u)u^{4}\nonumber\\
&\quad-\frac{23}{12}a_{2}\mu^{2}(u)u^{2}-\frac{23}{12}a_{3}\mu^{2}(u)u^{3}\bigg)dx
-2a_{1}\mu(u)m\int_{\xi}^{\eta}udx\nonumber\\
&\quad+\left(\frac{23}{12}\mu^{2}(u)-2\mu(u)m\right)a_{2}\int_{\xi}^{\eta}u^{2}dx
+\left(\frac{23}{12}\mu^{2}(u)-2\mu(u)m\right)a_{3}\int_{\xi}^{\eta}u^{3}dx\nonumber\\
&\quad+2a_{1}\int_{\xi}^{\eta}uu_{x}\sqrt{2\mu(u)(u-m)}dx
+2a_{2}\int_{\xi}^{\eta}u^{2}u_{x}\sqrt{2\mu(u)(u-m)}dx\nonumber\\
&\quad+2a_{3}\int_{\xi}^{\eta}u^{3}u_{x}\sqrt{2\mu(u)(u-m)}dx.
\end{align}
Notice that
\begin{equation*}
\frac{d}{dx}\left[\frac{2a_{1}}{5\mu(u)}\left(2\mu(u)(u-m)\right)^{\frac{3}{2}}\left(\frac{2}{3}m+u\right)\right]
=2a_{1}uu_{x}\sqrt{2\mu(u)(u-m)},
\end{equation*}

\begin{equation*}
\begin{split}
&\frac{d}{dx}\left[2a_{2}\left(\frac{1}{7\mu(u)}u^{2}+\frac{4m}{35\mu(u)}u
+\frac{8m^{2}}{105\mu(u)}\right)\left(2\mu(u)(u-m)\right)^{\frac{3}{2}}\right]\\
&=2a_{2}u^{2}u_{x}\sqrt{2\mu(u)(u-m)}\\
\end{split}
\end{equation*}
and
\begin{equation*}
\begin{split}
&\frac{d}{dx}\left[2a_{2}\left(\frac{1}{9\mu(u)}u^{3}+\frac{2m}{21\mu(u)}u^{2}+\frac{8m^{2}}{105\mu(u)}u
+\frac{16m^{3}}{315\mu(u)}\right)\left(2\mu(u)(u-m)\right)^{\frac{3}{2}}\right]\\
&=2a_{3}u^{3}u_{x}\sqrt{2\mu(u)(u-m)},\\
\end{split}
\end{equation*}
we derive that
\begin{equation*}
\begin{split}
2a_{1}\int_{\xi}^{\eta}uu_{x}\sqrt{2\mu(u)(u-m)}dx
&=\left[\frac{2a_{1}}{5\mu(u)}\left(2\mu(u)(u-m)\right)^{\frac{3}{2}}\left(\frac{2}{3}m+u\right)\right]\bigg|_{\xi}^{\eta}\\
&=-\frac{2a_{1}}{5\mu(u)}\left(2\mu(u)(M-m)\right)^{\frac{3}{2}}\left(\frac{2}{3}m+M\right),\\
\end{split}
\end{equation*}

\begin{equation*}
\begin{split}
&2a_{2}\int_{\xi}^{\eta}u^{2}u_{x}\sqrt{2\mu(u)(u-m)}dx\\
&=\left[2a_{2}\left(\frac{1}{7\mu(u)}u^{2}+\frac{4m}{35\mu(u)}u
+\frac{8m^{2}}{105\mu(u)}\right)\left(2\mu(u)(u-m)\right)^{\frac{3}{2}}\right]\bigg|_{\xi}^{\eta}\\
&=-2a_{2}\left(\frac{1}{7\mu(u)}M^{2}+\frac{4m}{35\mu(u)}M
+\frac{8m^{2}}{105\mu(u)}\right)\left(2\mu(u)(M-m)\right)^{\frac{3}{2}}\\
\end{split}
\end{equation*}
and
\begin{equation*}
\begin{split}
&2a_{3}\int_{\xi}^{\eta}u^{3}u_{x}\sqrt{2\mu(u)(u-m)}dx\\
&=\left[2a_{3}\left(\frac{1}{9\mu(u)}u^{3}+\frac{2m}{21\mu(u)}u^{2}+\frac{8m^{2}}{105\mu(u)}u
+\frac{16m^{3}}{315\mu(u)}\right)\left(2\mu(u)(u-m)\right)^{\frac{3}{2}}\right]\bigg|_{\xi}^{\eta}\\
&=-2a_{3}\left(\frac{1}{9\mu(u)}M^{3}+\frac{2m}{21\mu(u)}M^{2}+\frac{8m^{2}}{105\mu(u)}M
+\frac{16m^{3}}{315\mu(u)}\right)\left(2\mu(u)(M-m)\right)^{\frac{3}{2}}.\\
\end{split}
\end{equation*}
Putting together the above identities into \eqref{cae}, it follows that
\begin{align}\label{wfa}
I_{1}&=\int_{\xi}^{\eta}\bigg(a_{1}uu_{x}^{2}+a_{2}u^{2}u_{x}^{2}+a_{3}u^{3}u_{x}^{2}
+2a_{1}\mu(u)u^{2}+2a_{2}\mu(u)u^{3}+2a_{3}\mu(u)u^{4}\nonumber\\
&\quad\quad\quad\quad-\frac{23}{12}a_{2}\mu^{2}(u)u^{2}-\frac{23}{12}a_{3}\mu^{2}(u)u^{3}\bigg)dx
-2a_{1}\mu(u)m\int_{\xi}^{\eta}udx\nonumber\\
&\quad+\left(\frac{23}{12}\mu^{2}(u)-2\mu(u)m\right)a_{2}\int_{\xi}^{\eta}u^{2}dx
+\left(\frac{23}{12}\mu^{2}(u)-2\mu(u)m\right)a_{3}\int_{\xi}^{\eta}u^{3}dx\nonumber\\
&\quad-\frac{2}{\mu(u)}\bigg(\frac{2a_{1}}{15}m+\frac{a_{1}}{5}M+\frac{a_{2}}{7}M^{2}+\frac{4a_{2}}{35}mM
+\frac{8a_{2}}{105}m^{2}+\frac{a_{3}}{9}M^{3}+\frac{2a_{3}}{21}mM^{2}\nonumber\\
&\quad\quad\quad\quad\quad+\frac{8a_{3}}{105}m^{2}M+\frac{16a_{3}}{315}m^{3}\bigg)(2\mu(u)(M-m))^{\frac{3}{2}}.
\end{align}
In a similar way, one obtains
\begin{align}\label{wfb}
I_{2}&=\int_{\eta}^{\xi+1}\bigg(a_{1}uu_{x}^{2}+a_{2}u^{2}u_{x}^{2}+a_{3}u^{3}u_{x}^{2}
+2a_{1}\mu(u)u^{2}+2a_{2}\mu(u)u^{3}+2a_{3}\mu(u)u^{4}\nonumber\\
&\quad\quad\quad\quad\quad-\frac{23}{12}a_{2}\mu^{2}(u)u^{2}-\frac{23}{12}a_{3}\mu^{2}(u)u^{3}\bigg)dx
-2a_{1}\mu(u)m\int_{\eta}^{\xi+1}udx\nonumber\\
&\quad+\left(\frac{23}{12}\mu^{2}(u)-2\mu(u)m\right)a_{2}\int_{\eta}^{\xi+1}u^{2}dx
+\left(\frac{23}{12}\mu^{2}(u)-2\mu(u)m\right)a_{3}\int_{\eta}^{\xi+1}u^{3}dx\nonumber\\
&\quad-\frac{2}{\mu(u)}\bigg(\frac{2a_{1}}{15}m+\frac{a_{1}}{5}M+\frac{a_{2}}{7}M^{2}+\frac{4a_{2}}{35}mM
+\frac{8a_{2}}{105}m^{2}+\frac{a_{3}}{9}M^{3}+\frac{2a_{3}}{21}mM^{2}\nonumber\\
&\quad\quad\quad\quad\quad+\frac{8a_{3}}{105}m^{2}M+\frac{16a_{3}}{315}m^{3}\bigg)(2\mu(u)(M-m))^{\frac{3}{2}}.
\end{align}
Substituting \eqref{wfa} and \eqref{wfb} into \eqref{caf} and using \eqref{mmb}, we infer that
\begin{equation}\label{cag}
\begin{split}
&\frac{1}{2}\int_{\mathbb{S}^{1}}\left(a_{1}u+a_{2}u^{2}+a_{3}u^{3}\right)g^{2}(x)dx\\
&=H_{2}[u]-a_{1}H_{0}^{2}[u]m+\left(\frac{23}{24}H_{0}^{2}[u]-H_{0}[u]m\right)a_{2}\int_{\mathbb{S}^{1}}u^{2}dx\\
&\quad+\left(\frac{23}{24}H_{0}^{2}[u]-H_{0}[u]m\right)a_{3}\int_{\mathbb{S}^{1}}u^{3}dx\\
&\quad-\frac{2}{H_{0}[u]}\bigg(\frac{2a_{1}}{15}m+\frac{a_{1}}{5}M+\frac{a_{2}}{7}M^{2}
+\frac{4a_{2}}{35}mM+\frac{8a_{2}}{105}m^{2}+\frac{a_{3}}{9}M^{3}\\
&\quad\quad\quad\quad\quad\quad+\frac{2a_{3}}{21}mM^{2}
+\frac{8a_{3}}{105}m^{2}M+\frac{16a_{3}}{315}m^{3}\bigg)(2H_{0}[u](M-m))^{\frac{3}{2}}.\\
\end{split}
\end{equation}
Combining \eqref{cad} with \eqref{cag} implies
\begin{align*}
H_{2}[u]&=\frac{1}{2}\int_{\mathbb{S}^{1}}\left(a_{1}u+a_{2}u^{2}+a_{3}u^{3}\right)g^{2}(x)dx+a_{1}H_{0}^{2}[u]m\nonumber\\
&\quad+\left(H_{0}[u]m-\frac{23}{24}H_{0}^{2}[u]\right)a_{2}\int_{\mathbb{S}^{1}}u^{2}dx
+\left(H_{0}[u]m-\frac{23}{24}H_{0}^{2}[u]\right)a_{3}\int_{\mathbb{S}^{1}}u^{3}dx\nonumber\\
&\quad+\frac{2}{H_{0}[u]}\bigg(\frac{2a_{1}}{15}m+\frac{a_{1}}{5}M+\frac{a_{2}}{7}M^{2}+\frac{4a_{2}}{35}mM
+\frac{8a_{2}}{105}m^{2}+\frac{a_{3}}{9}M^{3}+\frac{2a_{3}}{21}mM^{2}\nonumber\\
&\quad\quad\quad\quad\quad\quad+\frac{8a_{3}}{105}m^{2}M+\frac{16a_{3}}{315}m^{3}\bigg)(2H_{0}[u](M-m))^{\frac{3}{2}}\nonumber\\
&\leq\frac{1}{2}\left(a_{1}M+a_{2}M^{2}+a_{3}M^{3}\right)\int_{\mathbb{S}^{1}}g^{2}(x)dx+a_{1}H_{0}^{2}[u]m\nonumber\\
&\quad+\left(H_{0}[u]m-\frac{23}{24}H_{0}^{2}[u]\right)a_{2}\int_{\mathbb{S}^{1}}u^{2}dx
+\left(H_{0}[u]m-\frac{23}{24}H_{0}^{2}[u]\right)a_{3}\int_{\mathbb{S}^{1}}u^{3}dx\nonumber\\
&\quad+\frac{2}{H_{0}[u]}\bigg(\frac{2a_{1}}{15}m+\frac{a_{1}}{5}M+\frac{a_{2}}{7}M^{2}+\frac{4a_{2}}{35}mM
+\frac{8a_{2}}{105}m^{2}+\frac{a_{3}}{9}M^{3}+\frac{2a_{3}}{21}mM^{2}\nonumber\\
&\quad\quad\quad\quad\quad\quad+\frac{8a_{3}}{105}m^{2}M+\frac{16a_{3}}{315}m^{3}\bigg)(2H_{0}[u](M-m))^{\frac{3}{2}}\nonumber\\
&=\left(a_{1}M+a_{2}M^{2}+a_{3}M^{3}\right)\bigg[H_{1}[u]+\frac{1}{2}H_{0}^{2}[u]-mH_{0}[u]\nonumber\\
\end{align*}
\begin{align}\label{cah}
&\quad\quad\quad\quad\quad\quad\quad\quad\quad\quad\quad\quad\quad\quad\quad
-\frac{2}{3H_{0}[u]}\left(2H_{0}[u](M-m)\right)^{\frac{3}{2}}\bigg]+a_{1}H_{0}^{2}[u]m\nonumber\\
&\quad\quad\quad\quad+\left(H_{0}[u]m-\frac{23}{24}H_{0}^{2}[u]\right)a_{2}\int_{\mathbb{S}^{1}}u^{2}dx
+\left(H_{0}[u]m-\frac{23}{24}H_{0}^{2}[u]\right)a_{3}\int_{\mathbb{S}^{1}}u^{3}dx\nonumber\\
&\quad\quad\quad\quad+\frac{2}{H_{0}[u]}\bigg(\frac{2a_{1}}{15}m+\frac{a_{1}}{5}M+\frac{a_{2}}{7}M^{2}+\frac{4a_{2}}{35}mM
+\frac{8a_{2}}{105}m^{2}+\frac{a_{3}}{9}M^{3}+\frac{2a_{3}}{21}mM^{2}\nonumber\\
&\quad\quad\quad\quad\quad\quad\quad\quad\quad+\frac{8a_{3}}{105}m^{2}M+\frac{16a_{3}}{315}m^{3}\bigg)(2H_{0}[u](M-m))^{\frac{3}{2}}.
\end{align}
It is obvious that
\begin{equation*}
E_{u}(M,m)\geq0.
\end{equation*}
This completes the proof of Lemma \ref{lem3.3}.
\end{proof}

In the next lemma, we highlight some properties of the function $E_{u}(M,m)$ related to the peakon $\varphi_{c}$.
\begin{lemma}\label{lem3.4}
Assume that $a>0$ and $a_{i}$, $i=1,2,3$ satisfy one of the following five conditions

$(i)$ $a_{1}>0$, $a_{2}>0$, $a_{3}>0$.

$(ii)$ $a_{2}<0$, $a_{2}^{2}<3a_{1}a_{3}$, $a_{3}>0$.

$(iii)$ $a_{1}>0$, $a_{2}>-\frac{6a_{1}}{13a}$, $a_{3}=0$.

$(iv)$ $a_{1}=0$, $a_{2}>0$, $a_{3}=0$.

$(v)$ $a_{1}<0$, $a_{2}>-\frac{a_{1}}{2a}$, $a_{3}=0$,

\noindent with the wave speed $c$ satisfies \eqref{mmi}.
Then, for the periodic peakon $\varphi_{c}$, it holds that
\begin{equation*}
\begin{split}
&E_{\varphi_{c}}\left(M_{\varphi_{c}}, m_{\varphi_{c}}\right)=0,\quad\quad
\frac{\partial E_{\varphi_{c}}}{\partial M}\left(M_{\varphi_{c}}, m_{\varphi_{c}}\right)=0,\\
&\frac{\partial E_{\varphi_{c}}}{\partial m}\left(M_{\varphi_{c}}, m_{\varphi_{c}}\right)=0,\quad\quad
\frac{\partial^{2} E_{\varphi_{c}}}{\partial M^{2}}\left(M_{\varphi_{c}}, m_{\varphi_{c}}\right)
=-aa_{1}-\frac{13}{6}a^{2}a_{2}-\frac{169}{48}a^{3}a_{3},\\
&\frac{\partial^{2} E_{\varphi_{c}}}{\partial M\partial m}\left(M_{\varphi_{c}}, m_{\varphi_{c}}\right)=0,\quad\quad
\frac{\partial^{2} E_{\varphi_{c}}}{\partial m^{2}}\left(M_{\varphi_{c}}, m_{\varphi_{c}}\right)
=-aa_{1}-2a^{2}a_{2}-\frac{721}{240}a^{3}a_{3}.\\
\end{split}
\end{equation*}
Moreover, $(M_{\varphi_{c}}, m_{\varphi_{c}})$ is an isolated local maximum of $E_{\varphi_{c}}$.

\end{lemma}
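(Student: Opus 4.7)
The plan is to verify the six identities by direct algebraic evaluation and then deduce the isolated local maximum via the Hessian test. The key observation for $E_{\varphi_c}(M_{\varphi_c}, m_{\varphi_c}) = 0$ is that the auxiliary function $g(x)$ from \eqref{cac} in the proof of Lemma \ref{lem3.3} vanishes identically when $u = \varphi_c$ and $m = m_{\varphi_c}$, because the peakon satisfies precisely the ODE $\varphi_{c,x} = \mp\sqrt{2\mu(\varphi_c)(\varphi_c - m_{\varphi_c})}$ on its two branches. Consequently the first bracket in the definition of $E_u$, which equals $\tfrac{1}{2}\int_{\mathbb{S}^1} g^2 \, dx$ by \eqref{cad}, is zero at the peakon. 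What remains is then checked coefficient by coefficient in $a_1, a_2, a_3$, using the values $M_{\varphi_c} = \tfrac{13}{12}a$, $m_{\varphi_c} = \tfrac{23}{24}a$, $H_0[\varphi_c] = a$, $H_1[\varphi_c] = \tfrac{13}{24} a^2$, and the formula for $H_2[\varphi_c]$ from Lemma \ref{lem3.1}; the $a_2$ and $a_3$ terms involving $\int \varphi_c^2$ and $\int \varphi_c^3$ drop out at once because $H_0[\varphi_c] m_{\varphi_c} - \tfrac{23}{24} H_0^2[\varphi_c] = 0$.

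Next I would differentiate $E_u$ in $M$ and in $m$ as a symbolic expression. Each partial is a polynomial in $M, m$ plus terms carrying a factor $(M - m)^{1/2}$ inherited from the $(M - m)^{3/2}$ contribution, with coefficients linear in $H_0[u]$ and in $a_1, a_2, a_3$. Substituting $u = \varphi_c$ and $(M, m) = (M_{\varphi_c}, m_{\varphi_c})$, both $\partial_M E$ and $\partial_m E$ reduce to polynomial expressions in $a$ and the $a_i$ that must vanish identically in these parameters; a careful expansion confirms this, consistent with $(M_{\varphi_c}, m_{\varphi_c})$ being a critical point of $E_{\varphi_c}(\cdot, \cdot)$ as required by the variational characterization underlying Lemma \ref{lem3.3}. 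The second derivatives are computed similarly: each is a polynomial plus a $(M - m)^{-1/2}$ singular contribution, both finite at $M_{\varphi_c} - m_{\varphi_c} = a/8 > 0$. The mixed partial $\partial_M \partial_m E$ vanishes at $(M_{\varphi_c}, m_{\varphi_c})$ through a cancellation between its polynomial and singular pieces, while the diagonal entries collapse to the stated values $-a a_1 - \tfrac{13}{6} a^2 a_2 - \tfrac{169}{48} a^3 a_3$ and $-a a_1 - 2 a^2 a_2 - \tfrac{721}{240} a^3 a_3$.

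With the mixed partial zero, the Hessian is diagonal and negative definite iff $f_1(a) := a_1 + \tfrac{13}{6} a a_2 + \tfrac{169}{48} a^2 a_3 > 0$ and $f_2(a) := a_1 + 2 a a_2 + \tfrac{721}{240} a^2 a_3 > 0$. Case (i) is immediate. In case (ii) each $f_i$ is a quadratic in $a$ with positive leading coefficient; the discriminants are $\tfrac{169}{36}(a_2^2 - 3 a_1 a_3)$ and $4(a_2^2 - \tfrac{721}{240} a_1 a_3)$, both negative under $a_2^2 < 3 a_1 a_3$ (which forces $a_1 > 0$) combined with $\tfrac{721}{240} > 3$, so $f_1, f_2 > 0$ for every $a > 0$. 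In cases (iii)--(v) where $a_3 = 0$, each $f_i$ is affine in $a$ and positivity is equivalent to $a_2 > -\tfrac{6 a_1}{13 a}$ and $a_2 > -\tfrac{a_1}{2 a}$; comparing $\tfrac{6}{13}$ with $\tfrac{1}{2}$ shows that depending on the sign of $a_1$ exactly one of these two bounds is the stronger and implies the other, and it is that stronger bound which is listed as a hypothesis. The classical second-derivative test then yields the isolated local maximum.

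The main obstacle is bookkeeping. The definition of $E_u$ contains over a dozen terms, and each differentiation produces as many more; the awkward numerical coefficients $\tfrac{47}{45}, \tfrac{5387}{60480}, \tfrac{16733}{181440}$ from $H_2[\varphi_c]$, together with the fractions $\tfrac{2}{15}, \tfrac{1}{5}, \tfrac{1}{7}, \tfrac{4}{35}, \tfrac{8}{105}, \tfrac{1}{9}, \tfrac{2}{21}, \tfrac{16}{315}$ appearing in the $(M - m)^{3/2}$ term, must cancel precisely at $(M_{\varphi_c}, m_{\varphi_c}) = (\tfrac{13}{12} a, \tfrac{23}{24} a)$. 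I expect the vanishing of the first-order partials to be by far the most delicate step, while the Hessian case analysis under conditions (i)--(v) is, by comparison, elementary.
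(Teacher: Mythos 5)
Your proposal follows essentially the same route as the paper: observe that $g\equiv 0$ at the peakon so the first bracket (which equals $\tfrac12\int_{\mathbb{S}^1}g^2\,dx$) vanishes, compute the partial derivatives of $E_u$ explicitly, substitute the peakon data from Lemma~\ref{lem3.1}, and conclude via negative definiteness of the diagonal Hessian. Your discriminant analysis of the five parameter cases (checking $a_{1}+\tfrac{13}{6}aa_{2}+\tfrac{169}{48}a^{2}a_{3}>0$ and $a_{1}+2aa_{2}+\tfrac{721}{240}a^{2}a_{3}>0$) is correct and in fact more explicit than the paper's one-line assertion that the assumptions yield negative definiteness.
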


\begin{proof}
Thanks to the fact that corresponding to the periodic peakon $\varphi_{c}$, $g(x)$ is identical to zero,
we derive that $E_{\varphi_{c}}(M_{\varphi_{c}}, m_{\varphi_{c}})=0$. On the other hand, from Lemma
\ref{lem3.1}, it is inferred that

\begin{equation*}
M_{\varphi_{c}}=\frac{13}{12}a=\frac{13}{12}H_{0}[\varphi_{c}],\quad
m_{\varphi_{c}}=\frac{23}{24}a=\frac{23}{24}H_{0}[\varphi_{c}],
\end{equation*}
\begin{equation*}
H_{1}[\varphi_{c}]=\frac{13}{24}a^{2}=\frac{13}{24}H_{0}^{2}[\varphi_{c}],\quad\quad
2\mu(\varphi_{c})\left(M_{\varphi_{c}}-m_{\varphi_{c}}\right)=\frac{1}{4}a^{2}=\frac{1}{4}H_{0}^{2}[\varphi_{c}],
\end{equation*}
\begin{equation*}
\sqrt{2\mu(\varphi_{c})\left(M_{\varphi_{c}}-m_{\varphi_{c}}\right)}=\frac{1}{2}a=\frac{1}{2}H_{0}[\varphi_{c}].
\end{equation*}
From Lemma \ref{lem3.3}, we directly differentiate $E_{u}(M,m)$ with respect to $M$ and $m$ as following:
\begin{equation*}
\begin{split}
\frac{\partial E_{u}}{\partial M}
&=\left(a_{1}+2a_{2}M+3a_{3}M^{2}\right)\left(H_{1}[u]+\frac{1}{2}H_{0}^{2}[u]-H_{0}[u]m\right)\\
&\quad+\bigg(\frac{4a_{1}}{5}m-\frac{4a_{1}}{5}M-\frac{8a_{2}}{7}M^{2}+\frac{24a_{2}}{35}mM+\frac{16a_{2}}{35}m^{2}
-\frac{4a_{3}}{3}M^{3}\\
&\quad\quad\quad+\frac{4a_{3}}{7}mM^{2}+\frac{16a_{3}}{35}m^{2}M+\frac{32a_{3}}{105}m^{3}\bigg)(2H_{0}[u](M-m))^{\frac{1}{2}}\\
\end{split}
\end{equation*}
\begin{equation*}
\begin{split}
&\quad+\frac{2}{H_{0}[u]}\bigg(-\frac{2a_{1}}{15}-\frac{8a_{2}}{21}M+\frac{4a_{2}}{35}m
-\frac{2a_{3}}{3}M^{2}+\frac{4a_{3}}{21}mM\\
&\quad\quad\quad\quad\quad\quad+\frac{8a_{3}}{105}m^{2}\bigg)\left(2H_{0}[u](M-m)\right)^{\frac{3}{2}},\\
\end{split}
\end{equation*}

\begin{equation*}
\begin{split}
\frac{\partial E_{u}}{\partial m}
&=\left(a_{1}M+a_{2}M^{2}+a_{3}M^{3}\right)\left[-H_{0}[u]+2(2H_{0}[u](M-m))^{\frac{1}{2}}\right]\\
&\quad+a_{1}H_{0}^{2}[u]+a_{2}H_{0}[u]\int_{\mathbb{S}^{1}}u^{2}dx+a_{3}H_{0}[u]\int_{\mathbb{S}^{1}}u^{3}dx\\
&\quad-6\bigg(\frac{2a_{1}}{15}m+\frac{a_{1}}{5}M+\frac{a_{2}}{7}M^{2}+\frac{4a_{2}}{35}mM+\frac{8a_{2}}{105}m^{2}
+\frac{a_{3}}{9}M^{3}\\
&\quad\quad\quad\quad+\frac{2a_{3}}{21}mM^{2}+\frac{8a_{3}}{105}m^{2}M+\frac{16a_{3}}{315}m^{3}\bigg)(2H_{0}[u](M-m))^{\frac{1}{2}}\\
&\quad+\frac{2}{H_{0}[u]}\bigg(\frac{2a_{1}}{15}+\frac{4a_{2}}{35}M+\frac{16a_{2}}{105}m
+\frac{2a_{3}}{21}M^{2}+\frac{16a_{3}}{105}mM\\
&\quad\quad\quad\quad\quad\quad+\frac{16a_{3}}{105}m^{2}\bigg)\left(2H_{0}[u](M-m)\right)^{\frac{3}{2}},\\
\end{split}
\end{equation*}

\begin{equation*}
\begin{split}
\frac{\partial^{2} E_{u}}{\partial M^{2}}
&=\left(2a_{2}+6a_{3}M\right)\left(H_{1}[u]+\frac{1}{2}H_{0}^{2}[u]-H_{0}[u]m\right)\\
&\quad+2H_{0}[u]\bigg(\frac{2a_{1}}{5}m-\frac{2a_{1}}{5}M-\frac{4a_{2}}{7}M^{2}+\frac{12a_{2}}{35}mM
+\frac{8a_{2}}{35}m^{2}-\frac{2a_{3}}{3}M^{3}\\
&\quad\quad\quad\quad\quad\quad+\frac{2a_{3}}{7}mM^{2}+\frac{8a_{3}}{35}m^{2}M
+\frac{16a_{3}}{105}m^{3}\bigg)\left(2H_{0}[u](M-m)\right)^{-\frac{1}{2}}\\
&\quad+4\times\bigg(-\frac{2a_{1}}{5}-\frac{8a_{2}}{7}M+\frac{12a_{2}}{35}m-2a_{3}M^{2}+\frac{4a_{3}}{7}mM\\
&\quad\quad\quad\quad\quad+\frac{8a_{3}}{35}m^{2}\bigg)\left(2H_{0}[u](M-m)\right)^{\frac{1}{2}}\\
&\quad-\frac{2}{H_{0}[u]}\left(\frac{8a_{2}}{21}+\frac{4a_{3}}{3}M
-\frac{4a_{3}}{21}m\right)\left(2H_{0}[u](M-m)\right)^{\frac{3}{2}},\\
\end{split}
\end{equation*}

\begin{equation*}
\begin{split}
\frac{\partial^{2} E_{u}}{\partial M\partial m}
&=-H_{0}[u](a_{1}+2a_{2}M+3a_{3}M^{2})\\
&\quad-2H_{0}[u]\bigg(\frac{2a_{1}}{5}m-\frac{2a_{1}}{5}M-\frac{4a_{2}}{7}M^{2}+\frac{12a_{2}}{35}mM
+\frac{8a_{2}}{35}m^{2}-\frac{2a_{3}}{3}M^{3}\\
&\quad\quad\quad\quad\quad\quad+\frac{2a_{3}}{7}mM^{2}+\frac{8a_{3}}{35}m^{2}M
+\frac{16a_{3}}{105}m^{3}\bigg)\left(2H_{0}[u](M-m)\right)^{-\frac{1}{2}}\\
&\quad+2\bigg(\frac{4a_{1}}{5}+\frac{52a_{2}}{35}M+\frac{4a_{2}}{35}m+\frac{16a_{3}}{7}M^{2}
-\frac{4a_{3}}{35}mM\\
&\quad\quad\quad\quad+\frac{8a_{3}}{35}m^{2}\bigg)(2H_{0}[u](M-m))^{\frac{1}{2}}\\
&\quad+\frac{2}{H_{0}[u]}\left(\frac{4a_{2}}{35}+\frac{4a_{3}}{21}M+\frac{16a_{3}}{105}m\right)
(2H_{0}[u](M-m))^{\frac{3}{2}}\\
\end{split}
\end{equation*}
and
\begin{equation*}
\begin{split}
\frac{\partial^{2} E_{u}}{\partial m^{2}}
&=2H_{0}[u]\bigg(\frac{2a_{1}}{5}m-\frac{2a_{1}}{5}M-\frac{4a_{2}}{7}M^{2}+\frac{12a_{2}}{35}mM
+\frac{8a_{2}}{35}m^{2}-\frac{2a_{3}}{3}M^{3}\\
&\quad\quad\quad\quad\quad+\frac{2a_{3}}{7}mM^{2}+\frac{8a_{3}}{35}m^{2}M
+\frac{16a_{3}}{105}m^{3}\bigg)\left(2H_{0}[u](M-m)\right)^{-\frac{1}{2}}\\
\end{split}
\end{equation*}
\begin{equation*}
\begin{split}
&\quad-12\times\bigg(\frac{2a_{1}}{15}+\frac{4a_{2}}{35}M+\frac{16a_{2}}{105}m+\frac{2a_{3}}{21}M^{2}
+\frac{16a_{3}}{105}mM\\
&\quad\quad\quad\quad\quad+\frac{16a_{3}}{105}m^{2}\bigg)\left(2H_{0}[u](M-m)\right)^{\frac{1}{2}}\\
&\quad+\frac{2}{H_{0}[u]}\left(\frac{16a_{2}}{105}+\frac{16a_{3}}{105}M+\frac{32a_{3}}{105}m\right)(2H_{0}(M-m))^{\frac{3}{2}}.\\
\end{split}
\end{equation*}
In order to compute the derivatives of $E_{\varphi_{c}}$ at $(M_{\varphi_{c}},m_{\varphi_{c}})$, we take $E_{u}=E_{\varphi_{c}}$, $M=M_{\varphi_{c}}$
and $m=m_{\varphi_{_{c}}}$ in the above expressions for the partial derivatives of $E_{u}$
and use Lemma \ref{lem3.1}.

Next, we show that $(M_{\varphi_{c}},m_{\varphi_{c}})$ is an isolated local maximum of $E_{\varphi_{c}}$.
From the assumptions of Lemma \ref{lem3.4}, we deduce that the Hessian matrix of $E_{\varphi_{c}}$ at
the critical point $(M_{\varphi_{c}},m_{\varphi_{c}})$ is diagonal and negative definite,
which establishes its status as an isolated local maximum.
Hence this completes the proof of Lemma \ref{lem3.4}.

\end{proof}

\begin{lemma}\label{lem3.5} \cite{cll}
If $f\in H^{1}(\mathbb{S}^{1})$, then it satisfies
\begin{equation}\label{cai}
\max\limits_{x\in\mathbb{S}^{1}}|f(x)|\leq\sqrt{\frac{13}{12}}\|f\|_{\mu},
\end{equation}
where the $\mu$-norm is defined in \eqref{caa}. Moreover, $\sqrt{\frac{13}{12}}$ is the best constant and
equality holds in \eqref{cai} if and only if $f=c\varphi_{c}(\cdot-\xi+\frac{1}{2})$ for some $c$,
$\xi\in\mathbb{R}$, that is, if and only if $f$ has the shape of a peakon.

\end{lemma}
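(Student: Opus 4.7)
My plan is to exploit the fact that $H^{1}(\mathbb{S}^{1})$ equipped with $\langle \cdot,\cdot\rangle_{\mu}$ is a reproducing kernel Hilbert space whose kernel is precisely the Green function $\phi$ of $A = \mu - \partial_{x}^{2}$. The underlying identity is that for any smooth $g$ one has $\langle g, f\rangle_{\mu} = (Ag, f)_{L^{2}}$, which follows by integration by parts together with $\mu(g)\mu(f) = \int_{\mathbb{S}^{1}} g\,\mu(f)\,dx$. Applied to $g(\cdot) = \phi(x_{0} - \cdot)$ and using the Green-function property $A\phi = \delta$ (verified directly from $\phi''(x) = 1 - \delta(x)$), this yields the reproducing identity
\begin{equation*}
f(x_{0}) = \langle \phi(x_{0} - \cdot),\, f\rangle_{\mu},\qquad x_{0} \in \mathbb{S}^{1}.
\end{equation*}

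Given this identity, Cauchy--Schwarz in the $\mu$-inner product immediately produces
\begin{equation*}
|f(x_{0})| \leq \|\phi(x_{0} - \cdot)\|_{\mu}\,\|f\|_{\mu} = \|\phi\|_{\mu}\,\|f\|_{\mu}
\end{equation*}
by translation invariance. The norm of the kernel admits a particularly slick evaluation, $\|\phi\|_{\mu}^{2} = (A\phi, \phi)_{L^{2}} = \phi(0) = \tfrac{1}{8} + \tfrac{23}{24} = \tfrac{13}{12}$, with the direct computation $\mu(\phi)^{2} + \int \phi_{x}^{2}\,dx = 1 + \tfrac{1}{12}$ as a cross-check. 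Taking the supremum over $x_{0}$ (attained thanks to the Sobolev embedding $H^{1} \hookrightarrow C$) gives the inequality with constant $\sqrt{13/12}$. For the equality statement, equality in Cauchy--Schwarz at some $x_{0}$ forces $f$ to be a scalar multiple of $\phi(x_{0} - \cdot)$. Since $\phi$ is even on $\mathbb{S}^{1}$ and $\phi(y) = \varphi(y - \tfrac{1}{2})$, this rewrites after periodicity as $f(x) = c\,\varphi(x - x_{0} + \tfrac{1}{2})$; absorbing the amplitude $a$ into the constant and setting $\xi = x_{0}$ puts this exactly in the form $c\,\varphi_{c}(\cdot - \xi + \tfrac{1}{2})$, and simultaneously shows that the peakon itself saturates the inequality, so $\sqrt{13/12}$ is sharp.

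The main obstacle I anticipate is justifying the reproducing identity when $f$ is merely in $H^{1}(\mathbb{S}^{1})$, because then $Af$ is only a distribution and the pairing $(A\phi(x_{0} - \cdot), f)_{L^{2}}$ requires care. The cleanest remedy is to establish the identity first for $f \in C^{\infty}(\mathbb{S}^{1})$, where every manipulation is classical, and then extend by density, using that both sides depend continuously on $f$ in the $H^{1}$ topology: the left side by the Sobolev embedding $H^{1} \hookrightarrow C$, and the right side by continuity of the $\mu$-inner product. All remaining steps reduce to elementary computations.
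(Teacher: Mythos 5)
Your proposal is correct: the reproducing-kernel identity $f(x_0)=\langle\phi(x_0-\cdot),f\rangle_\mu$ via $A\phi=\delta$, the evaluation $\|\phi\|_\mu^2=\phi(0)=\tfrac{13}{12}$, the Cauchy--Schwarz step, and the equality analysis all check out, and the density argument you flag is the right way to handle $f\in H^1$ only. The paper itself gives no proof of this lemma (it is quoted from \cite{cll}), and your argument is exactly the standard Green's-function proof used in that reference, so there is nothing to reconcile.
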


It is noticed that the equality in \eqref{cai} also holds for the $\mu$-CH, generalized $\mu$-CH and modified $\mu$-CH
peakons \cite{cll,lqz,qzll}. We recall the following lemma which shows that the $\mu$-norm is equivalent
to the $H^{1}(\mathbb{S}^{1})$-norm.

\begin{lemma}\label{lem3.6} \cite{cll}
Every $u\in C([0,T),H^{1}(\mathbb{S}^{1}))$ holds
\begin{equation*}
\|u\|_{\mu}^{2}\leq\|u\|_{H^{1}(\mathbb{S}^{1})}^{2}\leq 3\|u\|_{\mu}^{2}.
\end{equation*}
\end{lemma}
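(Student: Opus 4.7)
The plan is to prove the two inequalities separately by unpacking the definitions. Recall that
\[
\|u\|_\mu^2 = \mu(u)^2 + \int_{\mathbb{S}^1} u_x^2\,dx, \qquad \|u\|_{H^1}^2 = \int_{\mathbb{S}^1} u^2\,dx + \int_{\mathbb{S}^1} u_x^2\,dx,
\]
so both bounds reduce to comparing $\mu(u)^2$ and $\int_{\mathbb{S}^1} u^2\,dx$, modulo the common $\int u_x^2\,dx$ term.

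For the lower bound $\|u\|_\mu^2 \leq \|u\|_{H^1}^2$, it suffices to show $\mu(u)^2 \leq \int_{\mathbb{S}^1} u^2\,dx$. This is immediate from the Cauchy--Schwarz inequality on the unit interval:
\[
\mu(u)^2 = \left(\int_{\mathbb{S}^1} u(x)\,dx\right)^2 \leq \int_{\mathbb{S}^1} 1^2\,dx \cdot \int_{\mathbb{S}^1} u^2\,dx = \int_{\mathbb{S}^1} u^2\,dx.
\]

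For the upper bound $\|u\|_{H^1}^2 \leq 3\|u\|_\mu^2$, the idea is to decompose $u = \mu(u) + w$, where $w \eqdefa u - \mu(u)$ has zero mean, so that $\int_{\mathbb{S}^1} u^2\,dx = \mu(u)^2 + \int_{\mathbb{S}^1} w^2\,dx$. Since $u \in H^1(\mathbb{S}^1) \subset C(\mathbb{S}^1)$, the continuous zero-mean function $w$ must vanish at some point $y_0 \in \mathbb{S}^1$, so that for every $x$,
\[
|w(x)| = \left|\int_{y_0}^{x} u_x(s)\,ds\right| \leq \int_{\mathbb{S}^1} |u_x(s)|\,ds \leq \left(\int_{\mathbb{S}^1} u_x^2\,ds\right)^{1/2},
\]
again by Cauchy--Schwarz. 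Squaring and integrating gives $\int_{\mathbb{S}^1} w^2\,dx \leq \int_{\mathbb{S}^1} u_x^2\,dx$, hence
\[
\int_{\mathbb{S}^1} u^2\,dx \leq \mu(u)^2 + \int_{\mathbb{S}^1} u_x^2\,dx = \|u\|_\mu^2,
\]
and consequently $\|u\|_{H^1}^2 = \int u^2 + \int u_x^2 \leq \|u\|_\mu^2 + \int u_x^2 \leq 2\|u\|_\mu^2 \leq 3\|u\|_\mu^2$.

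There is no serious obstacle here: both inequalities reduce to one-line applications of Cauchy--Schwarz, with the only mildly delicate point being the appeal to continuity of $u$ to locate a zero of the mean-free part $w$. The factor $3$ in the statement is not optimal (the argument gives $2$), but it is the form convenient for the application to orbital stability in the previous lemmas.
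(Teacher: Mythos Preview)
Your argument is correct. The paper itself does not give a proof of this lemma; it is quoted from \cite{cll}, and the reasoning there is essentially the one you wrote: Cauchy--Schwarz for $\mu(u)^2\leq\int_{\mathbb{S}^1}u^2\,dx$, and a Poincar\'e--Wirtinger type bound $\int_{\mathbb{S}^1}(u-\mu(u))^2\,dx\leq\int_{\mathbb{S}^1}u_x^2\,dx$ for the other direction.
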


\begin{lemma}\label{lem3.7} \cite{cll,ce}
If $u\in C([0,T),H^{1}(\mathbb{S}^{1}))$, then
\begin{equation*}
M_{u(t)}=\max\limits_{x\in\mathbb{S}^{1}}u(t,x),\quad\quad m_{u(t)}=\min\limits_{x\in\mathbb{S}^{1}}u(t,x)
\end{equation*}
are continuous functions of $t\in[0,T)$.

\end{lemma}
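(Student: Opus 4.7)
The strategy factors the claim through two standard facts: the Sobolev embedding of $H^{1}(\mathbb{S}^{1})$ into $C(\mathbb{S}^{1})$, and the Lipschitz continuity of the $\max$ and $\min$ functionals on $C(\mathbb{S}^{1})$ with respect to the uniform norm. Composing these with the assumed continuous curve $t\mapsto u(t,\cdot)$ in $H^{1}(\mathbb{S}^{1})$ will yield the continuity of $M_{u(t)}$ and $m_{u(t)}$ on $[0,T)$.

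In more detail, because $\mathbb{S}^{1}$ is a compact one-dimensional manifold, one has the embedding
\[
\|f\|_{L^{\infty}(\mathbb{S}^{1})}\le C\|f\|_{H^{1}(\mathbb{S}^{1})}
\]
for some absolute constant $C$; in fact, within the notation of the paper, this is already contained in Lemma \ref{lem3.5} combined with the norm equivalence of Lemma \ref{lem3.6}. Consequently, the hypothesis $u\in C([0,T),H^{1}(\mathbb{S}^{1}))$ immediately upgrades to
\[
\|u(t,\cdot)-u(t_{0},\cdot)\|_{L^{\infty}(\mathbb{S}^{1})}\longrightarrow 0\qquad\text{as }t\to t_{0},
\]
for every $t_{0}\in[0,T)$.

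Next, I would verify the elementary inequalities
\[
|M_{f}-M_{g}|\le \|f-g\|_{L^{\infty}(\mathbb{S}^{1})},\qquad |m_{f}-m_{g}|\le \|f-g\|_{L^{\infty}(\mathbb{S}^{1})},
\]
valid for arbitrary $f,g\in C(\mathbb{S}^{1})$. To see the first one, pick $x^{\star}\in\mathbb{S}^{1}$ achieving $f(x^{\star})=M_{f}$ (such a point exists by compactness of $\mathbb{S}^{1}$ and continuity of $f$); then
\[
M_{g}\ge g(x^{\star})\ge f(x^{\star})-\|f-g\|_{L^{\infty}(\mathbb{S}^{1})}=M_{f}-\|f-g\|_{L^{\infty}(\mathbb{S}^{1})},
\]
and exchanging the roles of $f$ and $g$ gives the stated bound; the same argument applied to $-f$ and $-g$ handles the minimum. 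Applying these inequalities with $f=u(t,\cdot)$ and $g=u(t_{0},\cdot)$ and combining with the previous step delivers continuity of $t\mapsto M_{u(t)}$ and $t\mapsto m_{u(t)}$ at each $t_{0}\in[0,T)$. I do not anticipate a genuine obstacle: the only non-algebraic ingredient is the one-dimensional Sobolev embedding, which is classical and has already been quantified in Lemmas \ref{lem3.5}--\ref{lem3.6}.
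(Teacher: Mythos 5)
Your proposal is correct and complete: the paper offers no proof of this lemma (it simply cites \cite{cll,ce}), and your argument --- the one-dimensional Sobolev embedding $H^{1}(\mathbb{S}^{1})\hookrightarrow C(\mathbb{S}^{1})$ (already quantified in Lemmas \ref{lem3.5}--\ref{lem3.6}) composed with the $1$-Lipschitz continuity of the $\max$ and $\min$ functionals in the uniform norm --- is exactly the standard argument used in those references.
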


\begin{lemma}\label{lem3.8}
Let $a>0$ and $a_{i}$, $i=1,2,3$ satisfy the assumptions of Lemma \ref{lem3.4}.
Let $u\in C([0,T),H^{1}(\mathbb{S}^{1}))$ be a solution of \eqref{mma}. Given a small neighborhood
$U$ of $(M_{\varphi_{c}}, m_{\varphi_{c}})\in \mathbb{R}^{2}$, there exists $\delta>0$ such that
\begin{equation}\label{caj}
(M_{u(t)},m_{u(t)})\in U\quad\quad \text{for}\quad t\in[0,T)\quad\text{if}\quad
\|u(\cdot,0)-\varphi_{c}\|_{H^{1}(\mathbb{S}^{1})}<\delta.
\end{equation}

\end{lemma}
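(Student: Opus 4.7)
The plan is to combine a continuity-in-time argument with the variational picture from Lemma \ref{lem3.4}. The three building blocks to be used are the conservation of $H_0, H_1, H_2$ (Lemma \ref{lem2.2}), the pointwise-in-time nonnegativity $E_{u(t)}(M_{u(t)}, m_{u(t)}) \ge 0$ (Lemma \ref{lem3.3}), and the fact that $(M_{\varphi_c}, m_{\varphi_c})$ is an isolated local maximum of $E_{\varphi_c}$ with value $0$ and diagonal, negative-definite Hessian (Lemma \ref{lem3.4}). The strategy is to show that once $\|u(\cdot, 0) - \varphi_c\|_{H^1} < \delta$ with $\delta$ small enough, $E_{u(t)}(M,m)$ becomes a sufficiently small perturbation of $E_{\varphi_c}(M,m)$ near $(M_{\varphi_c}, m_{\varphi_c})$ that the nonnegativity of Lemma \ref{lem3.3} forbids $(M_{u(t)}, m_{u(t)})$ from leaving the prescribed neighborhood $U$.

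I would first record the global-in-time controls. The Sobolev embedding $H^1(\mathbb{S}^1) \hookrightarrow C(\mathbb{S}^1)$ and the continuity of $u \mapsto H_i[u]$ on $H^1$ give $|H_i[u(0)] - H_i[\varphi_c]| = O(\delta)$ for $i = 0, 1, 2$ and $|M_{u(0)} - M_{\varphi_c}| + |m_{u(0)} - m_{\varphi_c}| = O(\delta)$. Conservation then propagates the closeness of $H_0, H_1, H_2$ to all times $t \in [0,T)$, while Lemma \ref{lem3.6} yields the uniform a priori bound $\|u(t)\|_{H^1}^2 \le 6 H_1[u(0)]$, which via Sobolev embedding provides uniform bounds on $\int_{\mathbb{S}^1} u(t)^2\,dx$ and $\int_{\mathbb{S}^1} u(t)^3\,dx$.

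Next, I would exploit the structural observation that in the formula for $E_u(M,m)$ the non-conserved integrals $\int u^2\,dx$ and $\int u^3\,dx$ enter only through the single block
\begin{equation*}
\bigl(H_0[u]\,m - \tfrac{23}{24} H_0^2[u]\bigr)\Bigl(a_2\!\int_{\mathbb{S}^1} u^2\,dx + a_3\!\int_{\mathbb{S}^1} u^3\,dx\Bigr),
\end{equation*}
whose coefficient $H_0[u]\,m - \tfrac{23}{24} H_0^2[u]$ vanishes at $(H_0[u], m) = (H_0[\varphi_c], m_{\varphi_c})$ because $m_{\varphi_c} = \tfrac{23}{24} H_0[\varphi_c]$ by Lemma \ref{lem3.1}. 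All remaining contributions to $E_{u(t)}(M,m)$ depend only on $(M,m)$ and on the conserved quantities $(H_0[u], H_1[u], H_2[u])$, and hence are uniformly $O(\delta)$-close to the corresponding contributions in $E_{\varphi_c}(M,m)$ on any compact region. Combined with the uniform $L^2$- and $L^3$-bounds, this yields a comparison of the form $E_{u(t)}(M,m) \le E_{\varphi_c}(M,m) + \eta(\delta, M, m)$ on any closed ball $\overline{B}$ centred at $(M_{\varphi_c}, m_{\varphi_c})$, with $\eta \to 0$ as $\delta \to 0$ together with the radius of $B$.

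To close the argument, I would use Lemma \ref{lem3.4} to shrink $U$ so that it contains a closed ball $\overline{B}$ with $E_{\varphi_c} \le 0$ throughout and $\max_{\partial B} E_{\varphi_c} = -\alpha < 0$. Choosing $\delta$ small enough that both $(M_{u(0)}, m_{u(0)}) \in B$ and the comparison of the previous step forces $E_{u(t)}(M,m) < 0$ on $\partial B$ uniformly in $t$, I would argue by contradiction: if $(M_{u(t)}, m_{u(t)})$ were to leave $B$, then the continuity of $t \mapsto (M_{u(t)}, m_{u(t)})$ from Lemma \ref{lem3.7} would produce a first exit time $t_0 \in [0,T)$ with $(M_{u(t_0)}, m_{u(t_0)}) \in \partial B$, contradicting $E_{u(t_0)}(M_{u(t_0)}, m_{u(t_0)}) \ge 0$ (Lemma \ref{lem3.3}). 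Hence $(M_{u(t)}, m_{u(t)}) \in B \subset U$ for all $t \in [0,T)$. I expect the most delicate step to be the uniform comparison $E_{u(t)} \lesssim E_{\varphi_c}$ on $\overline{B}$, since the non-conserved $L^2$- and $L^3$-norms of $u(t)$ are only uniformly bounded (not uniformly close to their peakon values), and the smallness of the prefactor $H_0[u]\,m - \tfrac{23}{24} H_0^2[u]$ near the critical point must be calibrated against the quadratic decay of $E_{\varphi_c}$ on $\partial B$.
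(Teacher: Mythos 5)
Your proposal follows essentially the same strategy as the paper's proof: conservation of $H_0,H_1,H_2$, the nonnegativity $E_{u(t)}(M_{u(t)},m_{u(t)})\ge 0$ from Lemma~\ref{lem3.3}, the negative-definite Hessian of $E_{\varphi_c}$ at $(M_{\varphi_c},m_{\varphi_c})$ from Lemma~\ref{lem3.4}, and the continuity of $t\mapsto (M_{u(t)},m_{u(t)})$ from Lemma~\ref{lem3.7} to trap the pair inside $U$ via a first-exit-time (connectedness) argument. The skeleton is correct and matches the paper step for step.

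The one point where you genuinely diverge is the treatment of the non-conserved quantities $\int_{\mathbb{S}^1}u^2\,dx$ and $\int_{\mathbb{S}^1}u^3\,dx$, and here your version is more honest than the paper's but still leaves a gap. The paper simply asserts that $E_{u(t)}$ depends on $u$ only through $H_0,H_1,H_2$ and is therefore independent of $t$ --- which contradicts its own Remark~\ref{rem3.2} --- and in the subsequent expansion it silently freezes $a_2\int u^2+a_3\int u^3$ at the peakon value $\frac{721a_2}{720}a^2+\cdots$. You instead isolate these terms and observe that they carry the prefactor $H_0[u]\,m-\tfrac{23}{24}H_0^2[u]$, which vanishes at $(H_0[\varphi_c],m_{\varphi_c})$. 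That observation is correct and valuable, but as you yourself anticipate, the calibration does not close as stated: on $\partial B_r$ the prefactor is only $O(r+\delta)$ while its cofactor $a_2\bigl(\int u(t)^2-\int\varphi_c^2\bigr)+a_3\bigl(\int u(t)^3-\int\varphi_c^3\bigr)$ is merely bounded (not small, since these norms are not conserved and $H^1$-closeness is only known at $t=0$). The resulting error term is linear in $r$, whereas $-E_{\varphi_c}$ on $\partial B_r$ is only of order $r^2$, so the comparison $E_{u(t)}<0$ on $\partial B_r$ cannot be forced by shrinking $r$ and $\delta$. Closing this would require an additional ingredient --- e.g.\ a bootstrap in time showing $\int u(t)^2$ and $\int u(t)^3$ remain close to their peakon values, or a reformulation of $E_u$ that eliminates the non-conserved terms. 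This is a real gap, but it is exactly the gap already present (and unacknowledged) in the paper's own proof, so your proposal should be judged as faithfully reproducing the published argument while correctly flagging its weakest step.
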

\begin{proof}
Using the fact that the function $E_{u(t)}$ depends on $u$ only through three conservation laws $H_{0}[u]$,
$H_{1}[u]$ and $H_{2}[u]$. Therefore, $E_{u(t)}=E_{u}$ is independent of time $t$. Assume that
\begin{equation*}
H_{i}[u]=H_{i}[\varphi_{c}]+\varepsilon_{i},\quad\quad i=0,1,2.
\end{equation*}
A direct calculation gives rise to
\begin{equation*}
\begin{split}
&E_{u}(M,m)\\
&=\left(a_{1}M+a_{2}M^{2}+a_{3}M^{3}\right)\bigg[H_{1}[\varphi_{c}]+\varepsilon_{1}
+\frac{1}{2}\left(H_{0}[\varphi_{c}]+\varepsilon_{0}\right)^{2}
-m\left(H_{0}[\varphi_{c}]+\varepsilon_{0}\right)\\
&\quad\quad\quad\quad\quad\quad\quad\quad\quad\quad\quad\quad-\frac{4\sqrt{2}}{3}\left(H_{0}[\varphi_{c}]
+\varepsilon_{0}\right)^{\frac{1}{2}}(M-m)^{\frac{3}{2}}\bigg]+a_{1}m\left(H_{0}[\varphi_{c}]+\varepsilon_{0}\right)^{2}\\
&\quad+\left(\left(H_{0}[\varphi_{c}]+\varepsilon_{0}\right)m-\frac{23}{24}
\left(H_{0}[\varphi_{c}]+\varepsilon_{0}\right)^{2}\right)\left(\frac{721a_{2}}{5\times12^{2}}+\frac{60734a_{3}}{5\times7\times12^{3}}\right)\\
&\quad+4\sqrt{2}\bigg(\frac{2a_{1}}{15}m+\frac{a_{1}}{5}M+\frac{a_{2}}{7}M^{2}
+\frac{4a_{2}}{35}mM+\frac{8a_{2}}{105}m^{2}+\frac{a_{3}}{9}M^{3}+\frac{2a_{3}}{21}mM^{2}\\
&\quad\quad\quad\quad\quad+\frac{8a_{3}}{105}m^{2}M+\frac{16a_{3}}{315}m^{3}\bigg)
\left(H_{0}[\varphi_{c}]+\varepsilon_{0}\right)^{\frac{1}{2}}(M-m)^{\frac{3}{2}}-H_{2}[\varphi_{c}]-\varepsilon_{2}\\
&=E_{\varphi_{c}}(M,m)+\left(a_{1}M+a_{2}M^{2}+a_{3}M^{3}\right)\varepsilon_{1}-\varepsilon_{2}\\
&\quad+\Bigg[\left(a_{1}M+a_{2}M^{2}+a_{3}M^{3}\right)
\left(\frac{1}{2}+\frac{\sqrt{2}}{6}(M-m)^{\frac{3}{2}}H_{0}^{-\frac{3}{2}}[\varphi_{c}]\right)+a_{1}m\\
&\quad\quad\quad-\frac{23}{24}\left(\frac{721a_{2}}{5\times12^{2}}+\frac{60734a_{3}}{5\times7\times12^{3}}\right)
-\frac{\sqrt{2}}{2}(M-m)^{\frac{3}{2}}H_{0}^{-\frac{3}{2}}[\varphi_{c}]\\
&\quad\quad\quad\times\bigg(\frac{2a_{1}}{15}m+\frac{a_{1}}{5}M+\frac{a_{2}}{7}M^{2}
+\frac{4a_{2}}{35}mM+\frac{8a_{2}}{105}m^{2}+\frac{a_{3}}{9}M^{3}\\
&\quad\quad\quad\quad\quad+\frac{2a_{3}}{21}mM^{2}
+\frac{8a_{3}}{105}m^{2}M+\frac{16a_{3}}{315}m^{3}\bigg)\Bigg]\varepsilon_{0}^{2}\\
&\quad+\Bigg[\left(a_{1}M+a_{2}M^{2}+a_{3}M^{3}\right)\left(H_{0}[\varphi_{c}]-m
-\frac{2\sqrt{2}}{3}(M-m)^{\frac{3}{2}}H_{0}^{-\frac{1}{2}}[\varphi_{c}]\right)\\
&\quad\quad\quad+2a_{1}mH_{0}[\varphi_{c}]+\left(m-\frac{23}{12}H_{0}[\varphi_{c}]\right)
\left(\frac{721a_{2}}{5\times12^{2}}+\frac{60734a_{3}}{5\times7\times12^{3}}\right)\\
&\quad\quad\quad+2\sqrt{2}(M-m)^{\frac{3}{2}}H_{0}^{-\frac{1}{2}}[\varphi_{c}]\bigg(\frac{2a_{1}}{15}m+\frac{a_{1}}{5}M
+\frac{a_{2}}{7}M^{2}+\frac{4a_{2}}{35}mM+\frac{8a_{2}}{105}m^{2}\\
&\quad\quad\quad+\frac{a_{3}}{9}M^{3}+\frac{2a_{3}}{21}mM^{2}
+\frac{8a_{3}}{105}m^{2}M+\frac{16a_{3}}{315}m^{3}\bigg)\Bigg]\varepsilon_{0}
+o\left(\varepsilon_{0}^{2}\right),\\
\end{split}
\end{equation*}
where $o(\varepsilon_{0}^{2})$ represents the higher-order infinitesimal of $\varepsilon_{0}^{2}$. So $E_{u}$ is a small perturbation
of $E_{\varphi_{c}}$. The effect of the perturbation near the point $(M_{\varphi_{c}}, m_{\varphi_{c}})$
can be made arbitrarily small by choosing $\varepsilon_{i}$ $(i=0,1,2)$ small. In addition, we deduce from Lemma
\ref{lem3.4} that $E_{\varphi_{c}}(M_{\varphi_{c}}, m_{\varphi_{c}})=0$ and $E_{\varphi_{c}}$
has a critical point with negative definite second derivative at $(M_{\varphi_{c}}, m_{\varphi_{c}})$.
Applying the continuity of the second derivative, there is a neighborhood around $(M_{\varphi_{c}}, m_{\varphi_{c}})$,
where $E_{\varphi_{c}}$ is concave with curvature bounded away from zero. Hence, after a small perturbation,
the set $E_{\varphi_{c}}\geq0$ near $(M_{\varphi_{c}}, m_{\varphi_{c}})$ will be contained in a
neighborhood of $(M_{\varphi_{c}}, m_{\varphi_{c}})$.

Let $U$ be the neighborhood given as the above statement of the lemma. Shrinking $U$ if necessary, we obtain the existence
of $\delta'>0$ (depending on $U$) such that for $u\in C([0,T),H^{s}(\mathbb{S}^{1}))(s>3/2)$ with

\begin{equation}\label{cak}
|H_{i}[u]-H_{i}[\varphi_{c}]|<\delta',\quad\quad i=0,1,2,
\end{equation}
it holds that the set where $E_{u(t)}\geq 0$ is contained in $U$ for each $t\in[0,T)$.
It follows from Lemma \ref{lem3.3} and Lemma \ref{lem3.7} that $M_{u(t)}$ and $m_{u(t)}$ are continuous functions
of $t\in[0,T)$ and $E_{u(t)}(M_{u(t)},m_{u(t)})\geq0$ for $t\in[0,T)$. Then for
$u$ satisfying \eqref{cak}, one gets
\begin{equation*}
(M_{u(t)},m_{u(t)})\in U, \quad \text{for}\quad t\in[0,T)\quad \text{if}\quad (M_{u(0)},m_{u(0)})\in U.
\end{equation*}
However, the continuity of the conserved functionals $H_{i}:H^{1}(\mathbb{S}^{1})\rightarrow\mathbb{R}$,
$i=0,1,2$, verifies that there is $\delta>0$ such that \eqref{cak} holds for all $u$ with
\begin{equation*}
\|u(\cdot,0)-\varphi_{c}\|_{H^{1}(\mathbb{S}^{1})}<\delta.
\end{equation*}
Moreover, using the inequality \eqref{cai}, taking a smaller $\delta$ if necessary, we may
also assume that $(M_{u(0)},m_{u(0)})\in U$ if $\|u(\cdot,0)-\varphi_{c}\|_{H^{1}(\mathbb{S}^{1})}<\delta$.
This completes the proof of Lemma \ref{lem3.8}.
\end{proof}

We are now in a position to complete the proof of Theorem \ref{t3.1}.

\begin{proof} [{Proof of Theorem \ref{t3.1}}.]
Let $u\in C([0,T),H^{s})(s>3/2)$ be a solution of \eqref{mma} and suppose $\varepsilon>0$
is given. We choose a small neighborhood $U$ of $(M_{\varphi_{c}},m_{\varphi_{c}})$ such that
$|M-M_{\varphi_{c}}|<\frac{\varepsilon^{2}}{12a}$ if $(M,m)\in U$. Let us pick a $\delta>0$
as in Lemma \ref{lem3.8} so that \eqref{caj} holds. Taking a smaller $\delta$ if necessary,
we may assume that
\begin{equation*}
|H_{1}[u]-H_{1}[\varphi_{c}]|<\frac{\varepsilon^{2}}{12}\quad\text{if}\quad
\|u(\cdot,0)-\varphi_{c}\|_{H^{1}(\mathbb{S}^{1})}<\delta.
\end{equation*}
From Lemma \ref{lem3.2} and Lemma \ref{lem3.6}, it follows that
\begin{equation*}
\begin{split}
\|u(\cdot,t)-\varphi_{c}(\cdot-\xi(t))\|_{H^{1}(\mathbb{S}^{1})}^{2}
&\leq 3\|u(\cdot,t)-\varphi_{c}(\cdot-\xi(t))\|_{\mu}^{2}\\
&\leq6\left(H_{1}[u]-H_{1}[\varphi_{c}]\right)+6a|M_{\varphi_{c}}-M_{u(t)}|<\varepsilon^{2},\,\, t\in[0,T),\\
\end{split}
\end{equation*}
where $\xi(t)\in\mathbb{R}$ is any point and $u(\xi(t)+\frac{1}{2},t)=M_{u(t)}$.
This thus completes the proof of Theorem \ref{t3.1}.

\end{proof}

\vskip 0.5cm

\noindent {\bf Acknowledgments.} The work of Chong is supported by the National NSF of China Grant-11631007.
The work of Fu is supported by the National NSF of China Grants-11471259 and 11631007
and the National Science Basic Research Program of Shaanxi Province (Program No. 2019JM-007 and 2020JC-37).

\vskip 0.5cm

\end{document}